\renewcommand{\theequation}{\theequation. \arabic{equation}}
\newtheorem{thm}{Theorem}[section]
\newtheorem{prop}[thm]{Proposition}
\newtheorem{defn}[thm]{Definition}
\begin{document}

\title*{On a system of $q$-partial differential equations with applications to $q$-series}
\author{Zhi-Guo Liu}
\institute{Zhi-Guo Liu \at Department of Mathematics, East China Normal University, 
Shanghai 200241, P. R. China, \email{liuzg@hotmail.com }}
%
%

\maketitle

\abstract{Using the theory of functions of several variables and $q$-calculus, we prove an expansion theorem for the analytic function in several variables which satisfies a system of $q$-partial differential equations. Some curious applications of this expansion theorem to $q$-series are discussed.
In particular,  an extension of Andrews' transformation formula for the $q$-Lauricella function is given.}

\section{Introduction and preliminaries}
\label{sec:1}
The $q$-analogue of the partial differential equation
$f_x= f_y$ was studied  by us in \cite{Liu2010, LiuRam2013}.
This investigation led us to develop a systematic method of deriving $q$-formulas, and many interesting results in $q$-series are proved with this method \cite{Liu2010, Liu2011, LiuRam2013, LiuZeng, LiuSymmetry, Liu2016}.
In particular, in \cite[Proposition~10.2]{LiuRam2013} we establish a general $q$-transformation formula which includes Watson's $q$-analog of Whipple's theorem as a special case.

In \cite{LiuHahn}  we further investigated the $q$-analogue of the partial differential equation
$f_x=\alpha f_y$, where  $\alpha \not=0$.

In this paper we continue
our investigation to discuss the $q$-extension of the partial differential equation
$\beta f_x=\alpha f_y,$   where $\alpha, \beta$ are two nonzero complex numbers.

The motivation of this paper  is to find the $q$-extension of $f(\alpha x+\beta y)$ through a $q$-partial differential equation.

As usual, we use $\mathbb{C}$ to denote the set of all complex numbers and
$\mathbb{C}^k=\mathbb{C}\times \cdots \times \mathbb{C}$ the set of all $k$-dimensional
complex numbers.

For $a \in \mathbb{C}$ and  any positive integer $n$, we define the $q$-shifted factorial as
\[
(a; q)_n=\prod_{k=0}^{n-1}(1-aq^k),\nonumber
\]
and it is understood that $(a; q)_0=1.$  For $|q|<1$, we further define the $q$-shifted factorial $(a; q)_\infty$ by
\[
(a; q)_\infty=\lim_{n\to \infty} (a; q)_n=\prod_{k=0}^\infty (1-aq^k),
\]
since this infinite product converges when $|q|<1.$

If $m$ is a positive integer, we define the multiple
$q$-shifted factorial as follows
\[
(a_1, a_2,...,a_m;q)_n=(a_1;q)_n(a_2;q)_n ... (a_m;q)_n,
\]
where $n$ is an integer or $\infty$.

Unless otherwise stated,  we suppose throughout that $|q|<1,$ which ensures that all the sums and
products appear in the paper converge.

The basic hypergeometric series  or the $q$-hypergeometric series ${_{r}\phi_s}(\cdot)$ are defined as
\[
_{r}\phi_s \left({{a_1, \ldots, a_{r}} \atop {b_1, \ldots,
b_s}} ;  q, z  \right) =\sum_{n=0}^\infty
\frac{(a_1, \ldots, a_r; q)_n}{(q, b_1, \ldots, b_s; q)_n}
\left((-1)^n q^{n(n-1)/2}\right)^{1+s-r}z^n.
\]

We now introduce some basic concepts in the $q$-calculus. The $q$-derivative was introduced by Leopold Schendel \cite{Schendel} in 1877 and
Frank Hilton Jackson \cite{Jackson1908} in 1908, which is a $q$-analog of the ordinary derivative.
\begin{defn}\label{qddfn}
If $q$ is a complex number, then, for any function $f(x)$ of one variable, the  $q$-derivative of $f(x)$
with respect to $x,$ is defined as
\[
{D}_{q}\{f(x)\}=\frac{f(x)-f(qx)}{x},
\]
and we further define  ${D}_{q}^0  \{f\}=f,$ and for $n\ge 1$, ${D}_{q}^n \{f\}={D}_{q}\{{D}_{q}^{n-1}\{f\}\}.$
\end{defn}

Now we give the definitions of  the $q$-partial derivative and the $q$-partial differential equations.
\begin{defn}\label{qpdfn}
A $q$-partial derivative of a function of several variables is its $q$-derivative with respect to one of those variables, regarding other variables as constants. The $q$-partial derivative of a function $f$ with respect to the variable $x$ is denoted by $\partial_{q, x}\{f\}$.
\end{defn}

\begin{defn}\label{qpde}
A $q$-partial differential equation is an equation that contains unknown multivariable functions and their $q$-partial derivatives.
\end{defn}

The Gaussian binomial coefficients also called the $q$-binomial coefficients are
the $q$-analogs of the binomial coefficients, which
are given by
\[
{n\brack k}_q=\frac{(q; q)_n}{(q; q)_k(q; q)_{n-k}}.
\]

We now introduce the homogeneous  polynomials  $\Phi^{(\alpha, \beta)}_n(x, y|q)$ in the following definition.
\begin{defn} \label{hvjpolydefn} The homogeneous polynomials  $\Phi^{(\alpha, \beta)}_n(x, y|q)$  are defined by
\[
\Phi^{(\alpha, \beta)}_n(x, y|q) =\sum_{k=0}^n {n\brack k}_q (\alpha; q)_k (\beta; q)_{n-k} x^k y^{n-k}.
\]
\end{defn}

When $\alpha=\beta=0,$ the homogeneous polynomials $\Phi^{(\alpha, \beta)}_n(x, y|q)$ reduce to the homogeneous Rogers--Szeg\H{o} polynomials
(see, for example \cite{LiuRam2013})
\[
h_n(x, y|q)=\sum_{k=0}^n {n\brack k}_q  x^k y^{n-k}.
\]
If we  set $\beta=0$ in $\Phi^{(\alpha, \beta)}_n(x, y|q)$, we can obtain the homogeneous Hahn polynomials (see, for example \cite{LiuHahn})
which are given by
\[
\Phi^{(\alpha)}_n(x, y|q)=\sum_{k=0}^n {n\brack k}_q (\alpha; q)_k x^k y^{n-k}.
\]
If we set $\alpha=\beta$ in $\Phi^{(\alpha, \beta)}_n(x, y|q)$,
we can obtain the  homogeneous continuous $q$--Ultraspherical polynomials
\[
C_n(x, y; \beta|q)=\sum_{k=0}^n {n\brack k}_q (\beta; q)_k (\beta; q)_{n-k} x^k y^{n-k}.
\]

The polynomials  $\Phi^{(\alpha, \beta)}_n(x, 1|q)$ have been studied by  Verma and Jain \cite{VermaJain} among others. It is obvious that $\Phi^{(\alpha, \beta)}_n(x, y|q)=y^n \Phi^{(\alpha, \beta)}_n(x/y, 1|q)$, but
an important difference between $\Phi^{(\alpha, \beta)}_n(x, 1|q)$ and $\Phi^{(\alpha, \beta)}_n(x, y|q)$ is that the latter satisfies the following $q$-partial differential equation,
which does not appear in the literature before.
\begin{prop}\label{liupdepp} The homogeneous  polynomials $\Phi^{(\alpha, \beta)}_n(x, y|q)$ satisfy the
$q$-partial differential equation
\begin{eqnarray*}
&&\partial_{q, x}\left\{\Phi^{(\alpha, \beta)}_n(x, y|q)-\beta \Phi^{(\alpha, \beta)}_n(x, qy|q)\right\}\\
&&=\partial_{q, y}\left\{\Phi^{(\alpha, \beta)}_n(x, y|q)-\alpha \Phi^{(\alpha, \beta)}_n(qx, y|q)\right\}.
\end{eqnarray*}
\end{prop}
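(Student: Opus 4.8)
The plan is to prove the identity by direct computation, reducing both sides to explicit polynomials in $x$ and $y$ and then matching coefficients. The two elementary facts I will use repeatedly are the monomial $q$-derivatives $\partial_{q,x}\{x^k y^{n-k}\}=(1-q^k)x^{k-1}y^{n-k}$ and $\partial_{q,y}\{x^k y^{n-k}\}=(1-q^{n-k})x^k y^{n-k-1}$, which follow at once from Definition~\ref{qddfn}, together with the two factorial-absorption identities $(\alpha;q)_k(1-\alpha q^k)=(\alpha;q)_{k+1}$ and $(\beta;q)_{n-k}(1-\beta q^{n-k})=(\beta;q)_{n-k+1}$.

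First I would simplify the bracketed expression on each side before differentiating. Since $\Phi^{(\alpha,\beta)}_n(x,qy|q)$ just replaces each $y^{n-k}$ by $q^{n-k}y^{n-k}$, the inner expression on the left collapses to $\sum_{k=0}^n {n\brack k}_q(\alpha;q)_k(\beta;q)_{n-k}(1-\beta q^{n-k})x^k y^{n-k}=\sum_{k=0}^n {n\brack k}_q(\alpha;q)_k(\beta;q)_{n-k+1}x^k y^{n-k}$, using the $\beta$-absorption identity. Symmetrically, because $\Phi^{(\alpha,\beta)}_n(qx,y|q)$ replaces each $x^k$ by $q^k x^k$, the inner expression on the right collapses to $\sum_{k=0}^n {n\brack k}_q(\alpha;q)_{k+1}(\beta;q)_{n-k}x^k y^{n-k}$ via the $\alpha$-absorption identity.

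Next I would apply $\partial_{q,x}$ to the left sum and $\partial_{q,y}$ to the right sum. On the left the $k=0$ term dies, and the surviving terms, after the shift $k\mapsto k+1$, read $\sum_{k=0}^{n-1}{n\brack k+1}_q(\alpha;q)_{k+1}(\beta;q)_{n-k}(1-q^{k+1})x^k y^{n-k-1}$. On the right the $k=n$ term dies, leaving $\sum_{k=0}^{n-1}{n\brack k}_q(\alpha;q)_{k+1}(\beta;q)_{n-k}(1-q^{n-k})x^k y^{n-k-1}$. Both are polynomials supported on exactly the same monomials $x^k y^{n-k-1}$ for $0\le k\le n-1$, carrying identical factors $(\alpha;q)_{k+1}(\beta;q)_{n-k}$, so the whole equality reduces to the single coefficient identity ${n\brack k+1}_q(1-q^{k+1})={n\brack k}_q(1-q^{n-k})$.

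Finally I would verify this last identity, which is just a rewriting of the $q$-binomial recurrence: expanding both sides through ${n\brack k}_q=(q;q)_n/\big((q;q)_k(q;q)_{n-k}\big)$ and using $(q;q)_{k+1}=(q;q)_k(1-q^{k+1})$ on the left and $(q;q)_{n-k}=(q;q)_{n-k-1}(1-q^{n-k})$ on the right, both sides reduce to $(q;q)_n/\big((q;q)_k(q;q)_{n-k-1}\big)$. I do not expect a genuine obstacle here; the only thing demanding care is the index bookkeeping—correctly identifying which boundary term is annihilated by each $q$-derivative and performing the single index shift on the left so that the two sums can be compared term by term.
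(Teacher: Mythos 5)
Your proof is correct and follows essentially the same route as the paper: compute both $q$-partial derivatives termwise (absorbing $1-\beta q^{n-k}$ and $1-\alpha q^{k}$ into the shifted factorials), perform a single index shift so the two sums run over the same monomials, and reduce everything to the $q$-binomial identity ${n\brack k+1}_q(1-q^{k+1})={n\brack k}_q(1-q^{n-k})$, which is exactly the paper's identity ${n\brack k}_q(1-q^{k})={n\brack k-1}_q(1-q^{n-k+1})$ after relabeling. The only cosmetic difference is that you shift the index in the left-hand sum while the paper shifts it in the right-hand one.
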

\begin{proof} \smartqed
Using the $q$-identity, $\partial_{q, x} \{ x^k\}=(1-q^k) x^{k-1},$  we easily deduce  that
\begin{eqnarray*}
&&\partial_{q, x}\left\{\Phi^{(\alpha, \beta)}_n(x, y|q)-\beta \Phi^{(\alpha, \beta)}_n(x, qy|q)\right\}\\
&&=\sum_{k=1}^n {n\brack k}_q (\alpha; q)_k (1-q^k) (\beta; q)_{n+1-k} x^{k-1} y^{n-k}.
\end{eqnarray*}
In the same way, using the identity, $\partial_{q, y} \{ y^{n-k}\}=(1-q^{n-k}) y^{n-k-1},$ we conclude that
\begin{eqnarray*}
&&\partial_{q, y}\left\{\Phi^{(\alpha, \beta)}_n(x, y|q)-\alpha \Phi^{(\alpha, \beta)}_n(qx, y|q)\right\}\\
&&=\sum_{k=0}^{n-1} {n\brack k}_q (\alpha; q)_{k+1} (\beta; q)_{n-k} (1-q^{n-k}) x^{k} y^{n-k-1}.
\end{eqnarray*}
If we make the variable change $k+1 \to k$ in the right-hand side of the above equation, we can find that
\begin{eqnarray*}
&&\partial_{q, y}\left\{\Phi^{(\alpha, \beta)}_n(x, y|q)-\alpha \Phi^{(\alpha, \beta)}_n(qx, y|q)\right\}\\
&&=\sum_{k=1}^{n} {n\brack {k-1}}_q (\alpha; q)_{k} (\beta; q)_{n+1-k}(1-q^{n-k+1}) x^{k-1} y^{n-k}.
\end{eqnarray*}
From the definition of the $q$-binomial coefficients, it is easy to verify that
\[
{n\brack k}_q (1-q^k)={n\brack {k-1}}_q  (1-q^{n-k+1}).
\]
Combining the above three equations, we complete the proof of Proposition~\ref{liupdepp}.
\qed
\end{proof}
\begin{defn} \label{qshiftdefn} If $f(x_1, \ldots, x_k)$ is a $k$-variable function,
then, for~$j=1, 2, \ldots,  k,$  the $q$-shift operator $\eta_{x_j}$ on the variable $x_j$ is defined as
\[
\eta_{x_j}\{f(x_1, \ldots, x_k)\}=f(x_1, \ldots, x_{j-1},  qx_j, x_{j+1}, \ldots, x_k)
\]

\end{defn}
The principal result of this paper is the following expansion theorem for the analytic functions in several variables.

\begin{thm}\label{pliuthm}
If $f(x_1,y_1, \ldots, x_k, y_k)$  is a $2k$-variable
 analytic function at $(0, \cdots, 0)\in \mathbb{C}^{2k}$, then,
 $f$ can be expanded in terms of
 \[
 \Phi_{n_1}^{(a_1, b_1)}(x_1, y_1|q)\cdots \Phi_{n_k}^{(a_k, b_k)}(x_k, y_k|q)
 \]
 if and only if  $f$ satisfies the following system of  $q$-partial differential equations:
 \[
 \partial_{q, x_j}(1-b_j \eta_{y_j})\{f\}=\partial_{q, y_j}(1-a_j \eta_{x_j})\{f\},
 \quad j\in\{1, 2, \ldots,  k\}.
 \]
\end{thm}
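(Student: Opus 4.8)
The plan is to prove the two implications separately, with the forward (necessity) direction being a quick consequence of Proposition~\ref{liupdepp} and the converse (sufficiency) carrying all the real content. For necessity, observe that the $j$-th differential operator $L_j := \partial_{q, x_j}(1-b_j\eta_{y_j}) - \partial_{q, y_j}(1-a_j\eta_{x_j})$ acts only on the pair $(x_j, y_j)$, treating every other variable as a constant. Hence, when it is applied to a product $\prod_{i} \Phi_{n_i}^{(a_i, b_i)}(x_i, y_i|q)$ it hits only the $j$-th factor, and Proposition~\ref{liupdepp}, rewritten with the shift operators in the form $L_j\Phi_{n_j}^{(a_j,b_j)} = 0$, makes that factor vanish. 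By linearity the same holds for any convergent expansion, so $f$ satisfies every equation of the system.

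For sufficiency I would first settle the base case $k=1$ and then lift it to general $k$ by induction. For $k=1$, write $f$ as a power series about the origin and regroup it by total degree, $f = \sum_{N\ge 0} P_N$, where $P_N(x,y) = \sum_{m=0}^N a_{N,m}\, x^m y^{N-m}$ is the homogeneous component of degree $N$. The key structural remark is that $L_1$ is homogeneity-lowering: the shifts $\eta_x, \eta_y$ preserve degree while $\partial_{q,x}, \partial_{q,y}$ drop it by one, so $L_1 P_N$ is homogeneous of degree $N-1$. Consequently $L_1 f = 0$ forces $L_1 P_N = 0$ for each $N$ separately. Feeding $P_N$ into $L_1$ and using $\partial_{q,x}\{x^m\} = (1-q^m)x^{m-1}$ turns $L_1 P_N = 0$ into the two-term recurrence $a_{N,m}(1-b_1 q^{N-m})(1-q^m) = a_{N,m-1}(1-a_1 q^{m-1})(1-q^{N-m+1})$ for $1 \le m \le N$. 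This recurrence determines the whole vector $(a_{N,0}, \ldots, a_{N,N})$ up to a single scalar, so the space of degree-$N$ solutions is one-dimensional; since $\Phi_N^{(a_1,b_1)}$ already lies in it by the necessity direction, we conclude $P_N = \lambda_N \Phi_N^{(a_1,b_1)}$ and therefore $f = \sum_N \lambda_N \Phi_N^{(a_1,b_1)}(x,y|q)$, with convergence inherited from the original power series.

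For the inductive step I would regard $f$ as an analytic function of $(x_1,y_1)$ with the remaining $2k-2$ variables as parameters. The first equation of the system is exactly the $k=1$ hypothesis in $(x_1,y_1)$, so the base case yields $f = \sum_{n_1\ge 0} A_{n_1}(x_2,y_2,\ldots,x_k,y_k)\,\Phi_{n_1}^{(a_1,b_1)}(x_1,y_1|q)$, where each coefficient $A_{n_1}$ is analytic in the remaining variables. Because $\Phi_{n_1}^{(a_1,b_1)}(x_1,y_1|q)$ is free of $x_2,\ldots,y_k$, applying $L_j$ for $j\ge 2$ commutes past it, and the linear independence of $\{\Phi_{n_1}^{(a_1,b_1)}\}_{n_1\ge 0}$ forces $L_j A_{n_1}=0$ for every $j\ge 2$. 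Thus each $A_{n_1}$ is an analytic function in $2(k-1)$ variables satisfying the reduced system, and the induction hypothesis expands it in the products $\Phi_{n_2}^{(a_2,b_2)}\cdots\Phi_{n_k}^{(a_k,b_k)}$; substituting back gives the asserted expansion of $f$.

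The main obstacle I anticipate is analytic bookkeeping rather than algebra: one must justify that regrouping the multivariable Taylor series by degree, applying the $q$-operators term by term, and interchanging $L_j$ with the infinite sum are all legitimate on a common polydisc, and that the coefficients $A_{n_1}$ extracted in the inductive step are genuinely analytic in the parameters, so that the induction hypothesis applies. A secondary point needing care is the nondegeneracy of the recurrence: the factors $1-q^m$ are nonzero for $1\le m\le N$ since $|q|<1$, but one should verify that the solution space stays exactly one-dimensional for the relevant parameter ranges, so that the normalization $P_N = \lambda_N\Phi_N^{(a_1,b_1)}$ is unambiguous and the expansion coefficients are uniquely determined.
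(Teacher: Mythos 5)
Your proposal is correct and follows essentially the same route as the paper: necessity via Proposition~\ref{liupdepp}, induction on $k$, and the same inductive step (expand in $(x_1,y_1)$, check analyticity of the coefficients, equate coefficients of $\Phi_{n_1}^{(a_1,b_1)}$ to pass to the reduced system). Your base case, which groups the Taylor series by homogeneous degree and shows the degree-$N$ solution space is one-dimensional, is just a repackaging of the paper's computation, where the reparametrized coefficients satisfy $\lambda_{k,l}=\lambda_{k-1,l+1}$ and hence depend only on $k+l$; both versions share the same implicit nondegeneracy assumption $(a_1;q)_k(b_1;q)_l\neq 0$ that you rightly flag.
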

Theorem~\ref{pliuthm} is a very powerful tool for proving $q$-formulas, which allows us to derive some deep $q$-formulas.
The rest of the  paper is organized as follows. Section~\ref{sec:2} is devoted to the proof of  Theorem~\ref{pliuthm}.
In Section~\ref{sec:3}, we will illustrate our approach by using Theorem~\ref{pliuthm} to prove two $q$-formulas.
Andrews \cite{Andrews1972} proved the following transformation formula for the $q$-Lauricella function.
\begin{prop}\label{Andrewspp} For $\max\{ |a|, |c|,  |y_1|, \ldots, |y_k|\}<1,$ we have
\begin{eqnarray*}
\sum_{n_1, \ldots, n_k=0}^\infty \frac{(a; q)_{n_1+\cdots+n_k}(\beta_1; q)_{n_1}\cdots (\beta_k; q)_{n_k}
y_1^{n_1} \cdots y_k^{n_k}}
{(c; q)_{n_1+\cdots+n_k}(q; q)_{n_1}\cdots (q; q)_{n_k}}\\
=\frac{(a, \beta_1y_1, \ldots, \beta_k y_k; q)_\infty}{(c, y_1, \ldots, y_k; q)_\infty}
{_{k+1}\phi_{k}}\left({{c/a, y_1, \ldots, y_k}
\atop{\beta_1 y_1, \ldots, \beta_k y_k}}; q, a\right).
\end{eqnarray*}
\end{prop}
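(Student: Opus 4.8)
The plan is to use Theorem~\ref{pliuthm} as an existence-of-expansion engine. The obstacle to applying it directly is that the proposition involves only the $k$ variables $y_1,\ldots,y_k$, while the theorem governs $2k$ variables; so the first move is to \emph{homogenize}, attaching to each $y_j$ a partner variable $z_j$. Guided by the identity $\frac{(\beta_j y_j;q)_\infty}{(y_j;q)_\infty}\cdot\frac{(y_j;q)_m}{(\beta_j y_j;q)_m}=\frac{(\beta_j y_j q^m;q)_\infty}{(y_j q^m;q)_\infty}$, which absorbs the numerator and denominator of the ${}_{k+1}\phi_{k}$ into $m$-shifted products, I would introduce the $2k$-variable function
\[
G=\frac{(a;q)_\infty}{(c;q)_\infty}\sum_{m=0}^\infty\frac{(c/a;q)_m}{(q;q)_m}a^m\prod_{j=1}^k\frac{(\beta_j y_j q^m;q)_\infty}{(y_j q^m,z_j q^m;q)_\infty},
\]
which is analytic near the origin under the stated hypotheses and whose restriction to $z_1=\cdots=z_k=0$ is exactly the right-hand side of the proposition (every factor $(z_j q^m;q)_\infty$ becomes $1$, and reversing the displayed identity reassembles the ${}_{k+1}\phi_{k}$).

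Next I would verify that $G$ solves the system of Theorem~\ref{pliuthm} with $(a_j,b_j)=(\beta_j,0)$. This rests on a single building block: the $q$-binomial theorem gives the generating function $\sum_{n\ge0}\Phi_n^{(\beta_j,0)}(y_j,z_j|q)\,t^n/(q;q)_n=\frac{(\beta_j y_j t;q)_\infty}{(y_j t,z_j t;q)_\infty}$, so the $j$-th factor of $G$ is this series at $t=q^m$. By Proposition~\ref{liupdepp} with second parameter $0$, each $\Phi_n^{(\beta_j,0)}$ is annihilated by $\partial_{q,y_j}-\partial_{q,z_j}(1-\beta_j\eta_{y_j})$; since the remaining factors of each summand are free of $(y_j,z_j)$, the whole of $G$ satisfies the $j$-th equation. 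Theorem~\ref{pliuthm} then yields an expansion $G=\sum_{n_1,\ldots,n_k\ge0}\lambda_{n_1,\ldots,n_k}\prod_{j=1}^k\Phi_{n_j}^{(\beta_j,0)}(y_j,z_j|q)$.

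To identify the coefficients I would specialise $y_1=\cdots=y_k=0$, where $\Phi_n^{(\beta_j,0)}(0,z_j|q)=z_j^{n}$, so $\lambda_{n_1,\ldots,n_k}$ is the coefficient of $z_1^{n_1}\cdots z_k^{n_k}$ in $G(0,z_1,\ldots,0,z_k)$. Expanding the surviving reciprocals $1/(z_j q^m;q)_\infty$ by the $q$-binomial theorem and summing the inner $m$-series through $\sum_{m\ge0}\frac{(c/a;q)_m}{(q;q)_m}(aq^{|n|})^m=\frac{(cq^{|n|};q)_\infty}{(aq^{|n|};q)_\infty}$, where $|n|=n_1+\cdots+n_k$, gives $\lambda_{n_1,\ldots,n_k}=\frac{(a;q)_{|n|}}{(c;q)_{|n|}(q;q)_{n_1}\cdots(q;q)_{n_k}}$. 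Feeding these back and now specialising $z_1=\cdots=z_k=0$, where $\Phi_n^{(\beta_j,0)}(y_j,0|q)=(\beta_j;q)_n y_j^{n}$, turns the expansion into the left-hand side of the proposition, while the same specialisation of the closed form for $G$ is the right-hand side; equality of the two values of $G$ is the asserted identity.

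I expect the work to be conceptual only at the homogenization step; the remaining obstacle is purely analytic bookkeeping, namely justifying the interchange of the $m$-summation with the multiple $n$-summations and the termwise $q$-differentiation of the infinite-product factors. Both are controlled by the hypothesis $\max\{|a|,|c|,|y_1|,\ldots,|y_k|\}<1$, which, together with smallness of the auxiliary $z_j$, secures the absolute and locally uniform convergence needed to legitimize every rearrangement above.
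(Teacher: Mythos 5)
Your proposal is correct, but it is necessarily a different route from the paper's, because the paper does not prove Proposition~\ref{Andrewspp} at all: it is quoted as a known result of Andrews (1972) and then used as the external input that pins down the coefficients $\lambda_{n_1,\ldots,n_k}$ in the Section~4 proof of Theorem~\ref{AndrewsLauricella}. What you do differently is to homogenize each $y_j$ with a partner variable $z_j$ carrying the degenerate parameter $0$, so that the candidate right-hand side becomes the restriction to $z_1=\cdots=z_k=0$ of a $2k$-variable function $G$ to which Theorem~\ref{pliuthm} applies with $(a_j,b_j)=(\beta_j,0)$; the decisive point is that specializing at the \emph{other} variable, $y_j=0$, collapses $\Phi_{n_j}^{(\beta_j,0)}(0,z_j|q)$ to the bare monomial $z_j^{n_j}$, so the expansion coefficients are read off from a Maclaurin expansion that the $q$-binomial theorem evaluates in closed form, with no appeal to Andrews. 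All the individual steps check out: $G$ is analytic near the origin for $|a|<1$ and small $y_j,z_j$ (the $m$-terms decay like $|a|^m$ since $(c/a;q)_m a^m=\prod_{i<m}(a-cq^i)$), each $m$-summand satisfies the $j$-th $q$-partial differential equation by Propositions~\ref{gthmb} and~\ref{liupdepp}, and the coefficient computation $\lambda_{n_1,\ldots,n_k}=(a;q)_{|n|}/\bigl((c;q)_{|n|}(q;q)_{n_1}\cdots(q;q)_{n_k}\bigr)$ is exactly right. What your approach buys is self-containedness: combined with Section~4 it makes Theorem~\ref{AndrewsLauricella} (and hence its corollaries) a consequence of the paper's own machinery alone, at the cost of the convergence and termwise-differentiation bookkeeping you flag, which is routine under the stated hypotheses. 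What the paper's choice buys is brevity, deferring to a classical reference for the base case.
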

In Section~\ref{sec:4}, we extend the Andrews formula to the following $q$-formula by using Theorem~\ref{pliuthm}.
\begin{thm}\label{AndrewsLauricella}If $\max\{ |a|, |c|, |x_1|, |y_1|, \ldots, |x_k|, |y_k|\}<1,$ then, we have
\begin{eqnarray*}
&&\sum_{n_1, \ldots, n_k=0}^\infty \frac{(a; q)_{n_1+\cdots+n_k}\Phi^{(\alpha_1, \beta_1)}_{n_1}(x_1, y_1|q)
\cdots \Phi^{(\alpha_k, \beta_k)}_{n_k}(x_k, y_k|q)}
{(c; q)_{n_1+\cdots+n_k}(q; q)_{n_1}\cdots (q; q)_{n_k}}\\
&&\quad=\frac{(a, \alpha_1 x_1, \beta_1 y_1, \ldots \alpha_k x_k, \beta_k y_k; q)_\infty}{(c, x_1, y_1, \ldots, x_k, y_k; q)_\infty}\\
&&\qquad \times{_{2k+1}\phi_{2k}}\left({{c/a, x_1, y_1, \ldots, x_k, y_k}
\atop{\alpha_1 x_1, \beta_1 y_1, \ldots, \alpha_k x_k, \beta_k y_k}}; q, a\right).
\end{eqnarray*}
\end{thm}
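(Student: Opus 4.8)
The plan is to apply Theorem~\ref{pliuthm} to the right-hand side and then pin down the expansion coefficients by a boundary specialization. Write $F(x_1,y_1,\ldots,x_k,y_k)$ for the right-hand side of Theorem~\ref{AndrewsLauricella}. First I would recast $F$ as a single sum by merging the infinite-product prefactor into the $_{2k+1}\phi_{2k}$ series: expanding the basic hypergeometric series and using $(\alpha_j x_j;q)_\infty/(\alpha_j x_j;q)_m=(\alpha_j x_j q^m;q)_\infty$ together with $(x_j;q)_m/(x_j;q)_\infty=1/(x_j q^m;q)_\infty$ (and likewise for the $y_j$), one obtains
\[
F=\frac{(a;q)_\infty}{(c;q)_\infty}\sum_{m=0}^\infty\frac{(c/a;q)_m}{(q;q)_m}\,a^m\prod_{j=1}^k\frac{(\alpha_j x_j q^m;q)_\infty(\beta_j y_j q^m;q)_\infty}{(x_j q^m;q)_\infty(y_j q^m;q)_\infty}.
\]
The hypotheses $\max\{|a|,|c|,|x_j|,|y_j|\}<1$ guarantee that this series converges and that $F$ is analytic in a neighbourhood of the origin of $\mathbb{C}^{2k}$, so Theorem~\ref{pliuthm} will apply once the $q$-partial differential system is verified.

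The central step is to check that $F$ satisfies $\partial_{q,x_j}(1-\beta_j\eta_{y_j})\{F\}=\partial_{q,y_j}(1-\alpha_j\eta_{x_j})\{F\}$ for every $j$. Since the $j$-th operators act only on $x_j,y_j$ and the summand factors as a product over the pairs $(x_\ell,y_\ell)$, it suffices to treat a single factor $A(x)B(y)$ with $A(x)=(\alpha_j x q^m;q)_\infty/(x q^m;q)_\infty$ and $B(y)=(\beta_j y q^m;q)_\infty/(y q^m;q)_\infty$. Using $(\alpha x;q)_\infty=(1-\alpha x)(\alpha q x;q)_\infty$ one derives the two first-order relations $\partial_{q,x}A=q^m(1-\alpha_j)A/(1-\alpha_j x q^m)$ and $A-\alpha_j\eta_x A=(1-\alpha_j)A/(1-\alpha_j x q^m)$, with the analogous pair for $B$. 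Substituting these, both sides of the $j$-th equation collapse to the same quantity
\[
A(x)B(y)\,\frac{q^m(1-\alpha_j)(1-\beta_j)}{(1-\alpha_j x q^m)(1-\beta_j y q^m)},
\]
so each summand obeys the system and hence so does $F$.

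With the system confirmed, Theorem~\ref{pliuthm} furnishes an expansion $F=\sum_{n_1,\ldots,n_k\ge0}c_{n_1,\ldots,n_k}\prod_{j=1}^k\Phi_{n_j}^{(\alpha_j,\beta_j)}(x_j,y_j|q)$. To determine the coefficients I would set $y_1=\cdots=y_k=0$. From Definition~\ref{hvjpolydefn} one has $\Phi_n^{(\alpha,\beta)}(x,0|q)=(\alpha;q)_n x^n$, so the expansion becomes the ordinary power series $\sum_{\mathbf n}c_{\mathbf n}\prod_j(\alpha_j;q)_{n_j}x_j^{n_j}$. On the other hand, putting $y_j=0$ in $F$ removes the factors $(\beta_j y_j;q)_\infty$ and $(y_j;q)_\infty$ and trivializes the ratios $(y_j;q)_m/(\beta_j y_j;q)_m$, reducing $F$ to exactly the right-hand side of Andrews' formula (Proposition~\ref{Andrewspp}) with $\beta_j$ and $y_j$ replaced by $\alpha_j$ and $x_j$. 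Equating the two power-series representations and matching the coefficient of $x_1^{n_1}\cdots x_k^{n_k}$ (for generic parameters, extended by continuity) yields $c_{n_1,\ldots,n_k}=(a;q)_{n_1+\cdots+n_k}/\big((c;q)_{n_1+\cdots+n_k}(q;q)_{n_1}\cdots(q;q)_{n_k}\big)$, which is precisely the coefficient appearing on the left-hand side of Theorem~\ref{AndrewsLauricella}.

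I expect the $q$-partial differential verification of the second step to be the main obstacle; the remaining steps are bookkeeping. That verification is tractable only because of the product factorization across the pairs $(x_j,y_j)$ and because the two first-order relations for $A$ (and for $B$) differ by exactly the factor $q^m$, which is what makes the two sides of the $j$-th equation coincide. I would note that this single-pair identity is the generating-function analogue of Proposition~\ref{liupdepp}, which already records that each polynomial $\Phi_n^{(\alpha,\beta)}(x,y|q)$ satisfies the very same equation.
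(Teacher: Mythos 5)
Your proposal is correct and follows essentially the same route as the paper: verify that the right-hand side is analytic and satisfies the $q$-partial differential system, invoke Theorem~\ref{pliuthm}, and then determine the coefficients by specializing half the variables to zero so that the expansion reduces to Andrews' formula (Proposition~\ref{Andrewspp}). The only cosmetic differences are that you set $y_1=\cdots=y_k=0$ where the paper sets $x_1=\cdots=x_k=0$, and that you spell out the single-pair computation behind what the paper calls ``a direct computation.''
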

When $x_1=x_2=\ldots=x_k=0,$ Theorem~\ref{AndrewsLauricella} immediately reduces to Andrews' formula in
Proposition~\ref{Andrewspp}, and when $\beta_1=\beta_2=\ldots=\beta_k=0$,  Theorem~\ref{AndrewsLauricella}
becomes \cite[Theorem~6.1]{LiuHahn}.  Theorem~\ref{AndrewsLauricella}  may be regarded as a multilinear generating function for $\Phi_n^{(a, b)}(x, y|q)$.
In Section~\ref{sec:5}, we prove another  multilinear generating function for $\Phi_n^{(a, b)}(x, y|q)$. In Section~\ref{sec:6}, we will use Theorem~\ref{pliuthm} to derive a $q$-integral formula.
\section{The proof of Theorem~\ref{pliuthm}}
\label{sec:2}
To prove Theorem~\ref{pliuthm},  we need the following fundamental property of
several complex variables (see, for example \cite[p. 5, Proposition~1]{Malgrange}).
\begin{prop}\label{mcvarapp}
If $f(x_1, x_2, \ldots,  x_k)$ is analytic at the origin $(0, 0, \ldots,  0)\in \mathbb{C}^k$, then,
$f$ can be expanded in an absolutely convergent power series,
 \[
 f(x_1, x_2, \ldots,  x_k)=\sum_{n_1, n_2, \ldots, n_k=0}^\infty \lambda_{n_1, n_2, \ldots, n_k}
 x_1^{n_1} x_2^{n_2}\cdots x_k^{n_k}.
 \]
 \end{prop}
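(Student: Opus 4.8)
The plan is to establish the multivariate Taylor--Cauchy expansion directly from the polydisc form of the Cauchy integral formula, which is the standard route to this classical fact. First I would fix a closed polydisc $\{|x_j|\le \rho_j : 1\le j\le k\}$ contained in the domain of analyticity of $f$, on whose distinguished boundary (the torus $|\zeta_j|=\rho_j$) the function $f$ is continuous and hence bounded, say $|f|\le M$. Applying the one-variable Cauchy integral formula successively in $x_1,\dots,x_k$ — legitimate because $f$ is holomorphic in each variable separately — yields the iterated representation
\[
f(x_1,\dots,x_k)=\frac{1}{(2\pi i)^k}\oint_{|\zeta_1|=\rho_1}\cdots\oint_{|\zeta_k|=\rho_k}\frac{f(\zeta_1,\dots,\zeta_k)}{(\zeta_1-x_1)\cdots(\zeta_k-x_k)}\,d\zeta_1\cdots d\zeta_k,
\]
valid for $|x_j|<\rho_j$.

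Next I would expand each kernel factor as a geometric series, $\frac{1}{\zeta_j-x_j}=\sum_{n_j\ge 0}x_j^{n_j}/\zeta_j^{n_j+1}$, convergent because $|x_j|<\rho_j=|\zeta_j|$, and multiply the $k$ series together. On the torus this multiple series is dominated termwise by $M\prod_{j}\sum_{n_j}(|x_j|/\rho_j)^{n_j}$, so it converges uniformly there and the $k$-fold summation may be interchanged with the $k$-fold integration. This produces
\[
f(x_1,\dots,x_k)=\sum_{n_1,\dots,n_k=0}^\infty \lambda_{n_1,\dots,n_k}\,x_1^{n_1}\cdots x_k^{n_k}
\]
with coefficients
\[
\lambda_{n_1,\dots,n_k}=\frac{1}{(2\pi i)^k}\oint_{|\zeta_1|=\rho_1}\cdots\oint_{|\zeta_k|=\rho_k}\frac{f(\zeta_1,\dots,\zeta_k)}{\zeta_1^{n_1+1}\cdots \zeta_k^{n_k+1}}\,d\zeta_1\cdots d\zeta_k.
\]
Absolute convergence then follows from the Cauchy estimates read off this coefficient formula, $|\lambda_{n_1,\dots,n_k}|\le M\rho_1^{-n_1}\cdots\rho_k^{-n_k}$, since on any smaller polydisc $|x_j|\le \sigma_j<\rho_j$ the series is dominated by $M\prod_j\sum_{n_j}(\sigma_j/\rho_j)^{n_j}<\infty$.

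I expect the main obstacle to lie not in the bookkeeping but in securing the two analytic inputs, above all the polydisc Cauchy formula from the hypothesis of analyticity. If \emph{analytic} is read as holomorphy in each variable separately together with joint continuity, the iterated Cauchy formula is immediate and the argument above is complete. If instead it is read as mere separate holomorphy, one must first invoke Hartogs' theorem (or Osgood's lemma) to upgrade to joint continuity before the iterated integral formula can be applied; the remaining geometric-series interchange and Cauchy estimates are then entirely routine. Since the cited reference \cite{Malgrange} records exactly this statement, in practice I would simply quote it, the sketch above being the self-contained justification.
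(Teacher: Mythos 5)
Your proposal is correct, but there is nothing in the paper to compare it against: the paper states this proposition as a quoted classical fact, citing Malgrange (p.~5, Proposition~1), and uses it as a black box in the proof of Theorem~\ref{pliuthm} without offering any proof. Your polydisc argument --- iterated one-variable Cauchy formula, geometric expansion of each kernel $1/(\zeta_j-x_j)$, uniform convergence on the distinguished boundary to justify interchanging the $k$-fold sum and integral, and the Cauchy estimates $|\lambda_{n_1,\ldots,n_k}|\le M\rho_1^{-n_1}\cdots\rho_k^{-n_k}$ yielding absolute convergence on every strictly smaller polydisc --- is exactly the standard proof recorded in such references, so it supplies the justification the paper delegates to the citation. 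Your closing caveat about the reading of ``analytic'' is also handled soundly: under joint holomorphy near the origin (the reading consistent with the paper's usage) the iterated Cauchy formula is immediate, while under mere separate holomorphy one needs Hartogs' theorem first --- and it is worth noting that the paper itself states Hartogs' theorem (Theorem~\ref{hartogthm}) for precisely the purpose of verifying joint analyticity of the concrete functions to which Theorem~\ref{pliuthm} is applied, so your two readings match the paper's own division of labor. In short: correct, complete modulo the standard regularity point you yourself flag, and strictly more informative than the paper, which proves nothing here.
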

 Now we begin to prove Theorem~\ref{pliuthm} with the help of Proposition~\ref{mcvarapp}.
\begin{proof}
\smartqed
This theorem can be proved  by induction. We first prove the theorem for the case
$k=1.$
Since $f$ is analytic at $(0, 0),$  we know that $f$ can be expanded in
an absolutely convergent power series in a neighborhood of $(0, 0)$. Thus there exists a sequence $\lambda_{k, l}$
independent of $x_1$ and $y_1$ such that
\begin{equation}
 f(x_1, y_1)=\sum_{k, l=0}^\infty \lambda_{k, l} (a_1; q)_k (b_1; q)_l {{k+l}\brack k}_q x_1^k y_1^l.
 \label{liu:eqn1}
\end{equation}
Substituting this into the $q$-partial differential equation,
$\partial_{q, x_1}\{f(x_1, y_1)-b_1 f(x_1, qy_1)\}=\partial_{q, y_1}\{f(x_1, y_1)-a_1 f(qx_1, y_1)\},$
using the identities,  $\partial_{q, x_1}\{x_1^k\}=(1-q^k)x_1^{k-1}$ and $\partial_{q, y_1}\{y_1^l\}=(1-q^l)y_1^{l-1}$,  we find that
\begin{eqnarray*}
\sum_{k, l=0}^\infty \lambda_{k, l} (a_1; q)_k (b_1; q)_{l+1} (1-q^k){{k+l}\brack k}_q x_1^{k-1} y_1^l\\
=\sum_{k, l=0}^\infty \lambda_{k, l} (a_1; q)_{k+1} (b_1; q)_l (1-q^l) {{k+l}\brack k}_q x_1^k y_1^{l-1}.
\end{eqnarray*}
Equating the coefficients of $x^{k-1}y^{l}$ on both sides of the above equation, we deduce that
\[
\lambda_{k, l}(a_1; q)_k (b_1; q)_{l+1}\frac{(q; q)_{k+l}}{(q; q)_{k-1}(q; q)_{l}}
=\lambda_{k-1, l+1} (a_1; q)_{k} (b_1; q)_{l+1} \frac{(q; q)_{k+l}}{(q; q)_{k-1}(q; q)_{l}}.
\]
It follows that $\lambda_{k, l}=\lambda_{k-1, l+1}.$ Iterating this relation $(k-1)$ times, we deduce that
$\lambda_{k, l}=\lambda_{0, l+k}.$ Substituting this into (\ref{liu:eqn1}), we arrive at
\[
 f(x_1, y_1)=\sum_{k, l=0}^\infty \lambda_{0, k+l} (a_1; q)_k (b_1; q)_l {{k+l}\brack k}_q x_1^k y_1^l.
\]
Making the variable change $n=k+l$ and interchanging the order of summation, we find that
\[
f(x_1, y_1)=\sum_{n=0}^\infty \lambda_{0, n} \sum_{k=0}^n {n\brack k}_q (a; q)_k (b; q)_{n-k} x^k y^{n-k}.
\]
Conversely, if $f(x_1, y_1)$ can be expanded in terms of $\Phi_n^{(a, b)}(x_1, y_1|q), $
then using Proposition~\ref{liupdepp},  we find that
\[
\partial_{q, x_1}\{f(x_1, y_1)-b_1 f(x_1, qy_1)\}=\partial_{q, y_1}\{f(x_1, y_1)-a_1 f(qx_1, y_1)\}.
\]
We conclude that Theorem~\ref{pliuthm} holds when $k=1$.

Now, we assume that the theorem is true for the case $k-1$ and consider the case $k$.
If we regard $f(x_1, y_1, \ldots, x_k, y_k)$ as a function of $x_1$ and $y_1, $ then $f$ is analytic at $(0, 0)$ and satisfies
\[
\partial_{q, x_1}\{f(x_1, y_1)-b_1 f(x_1, qy_1)\}=\partial_{q, y_1}\{f(x_1, y_1)-a_1 f(qx_1, y_1)\}.
\]
Thus,  there exists a sequence
$\{c_{n_1}(x_2, y_2, \ldots, x_k, y_k)\}$ independent of $x_1$ and $y_1$ such that
\begin{equation}
f(x_1, y_1, \ldots, x_k, y_k)=\sum_{n_1=0}^\infty c_{n_1}(x_2, y_2, \ldots, x_k, y_k)\Phi_{n_1}^{(a_1, b_1)}( x_1, y_1|q).
\label{liu:eqn2}
\end{equation}
Setting $x_1=0$ in the above equation and using $\Phi_{n_1}^{(a_1, b_1)}( 0, y_1|q)=(b_1; q)_{n_1} y_1^{n_1},$ we obtain
\[
f(0, y_1, x_2, y_2,  \ldots, x_k, y_k)=\sum_{n_1=0}^\infty (b_1; q)_{n_1}c_{n_1}(x_2, y_2, \ldots, x_k, y_k)y_1^{n_1}.
\]
Using the Maclaurin expansion theorem, we immediately deduce that
\[
c_{n_1}(x_2, y_2, \ldots, x_k, y_k)=\frac{\partial^{n_1} f(0, y_1, x_2, y_2,  \ldots, x_k, y_k)}{(b_1; q)_{n_1} n_1! \partial {y_1}^{n_1}}\Big|_{y_1=0}.
\]
Since $f(x_1, y_1, \ldots, x_k, y_k)$ is analytic near $(x_1, y_1, \ldots, x_k, y_k)=(0, \ldots, 0)\in \mathbb{C}^{2k},$  from
the above equation, we know that $c_{n_1}(x_2, y_2, \ldots, x_k, y_k)$ is analytic near $(x_2, y_2, \ldots, x_k, y_k)=(0, \ldots, 0)\in \mathbb{C}^{2k-2}.$
Substituting (\ref{liu:eqn2}) into the $q$-partial differential equations in Theorem~\ref{pliuthm}, we find that for $j=2, \ldots,  k,$
\begin{eqnarray*}
&&\sum_{n_1=0}^\infty  \partial_{q, x_j}(1-b_j \eta_{y_j})\{c_{n_1}(x_2, y_2, \ldots, x_k, y_k)\}\Phi_{n_1}^{(a_1, b_1)}(x_1, y_1|q)\\
&&=\sum_{n_1=0}^\infty\partial_{q, y_j}(1-a_j \eta_{x_j})\{c_{n_1}(x_2, y_2, \ldots, x_k, y_k)\}\Phi_{n_1}^{(a_1, b_1)}(x_1, y_1|q).
\end{eqnarray*}
By equating the coefficients of $\Phi_{n_1}^{(a_1, b_1)}(x_1, y_1|q)$ in the above equation, we find that
for $j=2, \ldots,  k,$
\begin{eqnarray*}
&\partial_{q, x_j}(1-b_j \eta_{y_j})\{c_{n_1}(x_2, y_2, \ldots, x_k, y_k)\}\\
&=\partial_{q, y_j}(1-a_j \eta_{x_j})\{c_{n_1}(x_2, y_2, \ldots, x_k, y_k)\}.
\end{eqnarray*}
Thus by the inductive hypothesis, there exists a sequence $\lambda_{n_1, n_2, \ldots, n_k}$ independent of
$x_2, y_2, \ldots, x_k, y_k$ (of course independent of $x_1$ and $y_1$) such that
\begin{eqnarray*}
&&c_{n_1}(x_2, y_2, \ldots, x_k, y_k)=\sum_{n_2, \ldots, n_k=0}^\infty \lambda_{n_1, n_2, \ldots, n_k}\\
&&\times \Phi_{n_2}^{(a_2, b_2)}( x_2, y_2|q)\ldots \Phi_{n_k}^{(a_k, b_k)}(x_k, y_k|q).
\end{eqnarray*}
Substituting this equation  into (\ref{liu:eqn2}), we complete the proof of the theorem.
\qed
\end{proof}
To determine if a given function is an analytic functions in several complex variables,
one can use the following theorem (see, for example, \cite[p. 28]{Taylor}).
\begin{thm}\label{hartogthm} {\rm (Hartogs' theorem).}
If a complex valued function $f(z_1, z_2, \ldots, z_n)$ is holomorphic (analytic) in each variable separately in a domain $U\in\mathbb{C}^n,$
then,  it is holomorphic (analytic) in $U.$
\end{thm}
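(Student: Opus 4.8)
The plan is to prove the theorem in two stages. Since holomorphy is a local property, it suffices to show that $f$ is jointly analytic in a neighborhood of each point of $U$, and after a translation we may assume this point is the origin and work on a closed polydisc $\overline{\Delta}=\overline{\Delta}_1\times\cdots\times\overline{\Delta}_n$ contained in $U$. The first and easier stage is to treat the case in which $f$ is \emph{also} assumed to be locally bounded. In that case one applies the one-variable Cauchy integral formula successively in $z_1,\ldots,z_n$ --- legitimate because $f$ is holomorphic in each variable when the others are fixed, and local boundedness together with measurability lets Fubini's theorem combine the iterated integrals into the multiple Cauchy representation
\[
f(z)=\frac{1}{(2\pi i)^n}\oint\cdots\oint
\frac{f(\zeta_1,\ldots,\zeta_n)\,d\zeta_1\cdots d\zeta_n}
{(\zeta_1-z_1)\cdots(\zeta_n-z_n)},
\]
the integration being over the distinguished boundary of $\Delta$. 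Expanding each factor $(\zeta_j-z_j)^{-1}$ as a geometric series and interchanging summation with integration --- justified by uniform convergence on the torus and the bound on $f$ --- yields an absolutely convergent multiple power series for $f$ centered at the origin. This is Osgood's lemma, and it shows that $f$ is jointly analytic whenever it is separately analytic and locally bounded.

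The second stage, which is the genuine content of the theorem, is to remove the boundedness hypothesis, and I would do this by induction on $n$, the case $n=1$ being trivial. Write $z=(z',z_n)$ with $z'\in\mathbb{C}^{n-1}$. For each fixed $z'$ the function $z_n\mapsto f(z',z_n)$ is holomorphic on $\Delta_n$, so it has a Taylor expansion
\[
f(z',z_n)=\sum_{k=0}^\infty c_k(z')\,z_n^k,
\qquad
c_k(z')=\frac{1}{2\pi i}\oint_{|\zeta|=r}\frac{f(z',\zeta)}{\zeta^{k+1}}\,d\zeta,
\]
whose radius of convergence is at least the radius $r_n$ of $\Delta_n$, uniformly in $z'$. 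For each fixed $z_n$ the slice $z'\mapsto f(z',z_n)$ is separately holomorphic in the $n-1$ variables $z'$, hence jointly holomorphic by the inductive hypothesis; feeding this into the Cauchy integral above and using Cauchy estimates shows that each coefficient $c_k(z')$ is holomorphic on $\Delta'$. Consequently the functions $v_k(z')=\frac{1}{k}\log|c_k(z')|$ are plurisubharmonic, and the uniform lower bound on the radius of convergence translates into the pointwise estimate $\limsup_{k\to\infty}v_k(z')\le-\log r_n$ for every $z'\in\Delta'$.

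The hard part, and the step I expect to be the main obstacle, is to upgrade this pointwise control into a \emph{uniform} one, for that is exactly what is needed to place $f$ back under the bounded case of the first stage. The plan is to combine a Baire-category argument with Hartogs' lemma on subharmonic functions: the closed sets $E_N=\{z'\in\Delta':|c_k(z')|\le N^{k}\ \forall k\}$ cover $\Delta'$, so some $E_{N_0}$ has nonempty interior, giving a subpolydisc on which the $v_k$ are uniformly bounded above; Hartogs' lemma then promotes $\limsup_k v_k\le-\log r_n$ to the locally uniform bound $v_k\le-\log r_n+\varepsilon$ for all large $k$, whence $\sum_k c_k(z')z_n^k$ converges uniformly on a full polydisc and $f$ is locally bounded there. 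Osgood's lemma then finishes the argument, with a continuation argument propagating joint holomorphy throughout the polydisc and hence throughout $U$. The essential difficulty is precisely that separate holomorphy is not a priori assumed to imply even continuity, so the subharmonic-function machinery cannot be bypassed; everything else is the standard iterated Cauchy computation and the bookkeeping of the induction.
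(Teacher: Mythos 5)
First, a contextual note: the paper does not prove this statement at all --- it quotes Hartogs' theorem as a classical result with a citation to Taylor's book \cite[p.~28]{Taylor}, so there is no internal proof to compare against. Your outline follows the standard literature proof (Osgood's lemma for the locally bounded case, then Baire category plus Hartogs' lemma on subharmonic functions), which is the right framework; but as assembled, the sketch has a genuine gap at exactly the step you call the main obstacle.

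The first problem is your claim that ``feeding this into the Cauchy integral above and using Cauchy estimates shows that each coefficient $c_k(z')$ is holomorphic on $\Delta'$.'' At that point in your argument no region of \emph{joint} holomorphy is yet available, and holomorphy of the parameter integral $c_k(z')=\frac{1}{2\pi i}\oint_{|\zeta|=r} f(z',\zeta)\zeta^{-k-1}\,d\zeta$ does not follow from holomorphy of $z'\mapsto f(z',\zeta)$ for each fixed $\zeta$: to differentiate under the integral sign, or to run Morera with Fubini, you need joint regularity --- e.g., local boundedness of $f(z',\zeta)$ in $z'$ uniformly for $\zeta$ on the circle --- and that is precisely the kind of control the theorem is trying to establish. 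Cauchy estimates give only the pointwise bound $|c_k(z')|\le M(z')r^{-k}$ with $M(z')$ uncontrolled. This also undermines your Baire step: the sets $E_N=\{z':|c_k(z')|\le N^k\ \forall k\}$ are closed only if the $c_k$ are continuous, which rests on the contested holomorphy. The standard Baire step instead uses $E_N=\{z':|f(z',z_n)|\le N\ \forall\,|z_n|\le r_n\}$, closed because each slice $f(\cdot,z_n)$ is continuous by the inductive hypothesis; this yields a tube $\omega\times\Delta_n$ on which $f$ is bounded, hence jointly holomorphic by Osgood.

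The second, related problem is where Hartogs' lemma is applied. The lemma requires the $v_k$ to be locally uniformly bounded above on \emph{all} of $\Delta'$; a pointwise bound $\limsup_k v_k\le-\log r_n$ alone is insufficient, since subharmonic sequences can spike. Your Baire argument furnishes the uniform upper bound only on a subpolydisc $\omega$, and applying the lemma there improves the bound only on compact subsets of $\omega$ --- reproducing what Baire--Osgood already gave and spreading nothing to the rest of $\Delta'$. The standard repair (H\"ormander, Theorem~2.2.8; Range, Ch.~II) is a transposition-and-enlargement argument: starting from the tube $\omega\times\Delta_n$, enlarge one coordinate at a time, taking $z_j$ as the series variable and noting that for fixed $z_j$ the inductive hypothesis gives joint holomorphy in the remaining $n-1$ variables; the Taylor coefficients are then computed by Cauchy integrals over a small circle lying \emph{inside} the already-established region of joint holomorphy, so their holomorphy and the required locally uniform upper bound on the whole parameter block are automatic, while separate holomorphy in the series variable supplies the pointwise $\limsup$ bound; Hartogs' lemma then legitimately fattens the small disc to the full one, and finitely many iterations reach a neighborhood of the given point. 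The ``continuation argument'' you dismiss in one clause at the end is exactly this nontrivial mechanism, not routine bookkeeping; without it your proof stalls at the step you yourself identified as the hard part.
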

\section{ Some generating functions for $\Phi_n^{(a, b)}(x, y|q)$}
\label{sec:3}
The following proposition is equivalent to the formula \cite[Equation {(2.1)}]{VermaJain}. Verma and
Jain proved this formula by using many known results in $q$-series and the  technique  of interchanging the order of summation.
In this section, we will use Theorem~\ref{pliuthm} to prove this proposition and the proof is quite different from
that of Verma and Jain.
\begin{thm}\label{gthma} If $m \ge 0$ is an integer and $\max\{|xt|, |yt|\}<1,$ then, we have
\begin{eqnarray*}
\sum_{n=0}^\infty \Phi_{n+m}^{(a, b)}(x, y|q)\frac{t^{n+m}}{(q; q)_n}
=\frac{(axt, byt; q)_\infty}{(xt, yt; q)_\infty}
{_3\phi_2}\left({{q^{-m}, xt, yt}\atop{axt, byt}}; q, q \right).
\end{eqnarray*}
\end{thm}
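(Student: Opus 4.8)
The plan is to apply Theorem~\ref{pliuthm} in the case $k=1$. The strategy is standard for this circle of ideas: identify the left-hand side as an analytic function of $(x,y)$, verify it satisfies the required $q$-partial differential equation, use the theorem to conclude it is a series in $\Phi_n^{(a,b)}(x,y|q)$, and then pin down the coefficients by evaluating at a convenient specialization.

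First I would define $F(x,y)$ to be the left-hand side, $F(x,y)=\sum_{n=0}^\infty \Phi_{n+m}^{(a,b)}(x,y|q)\,t^{n+m}/(q;q)_n$, and check that it is analytic at the origin (for $\max\{|xt|,|yt|\}<1$ this follows from the explicit polynomial form of the $\Phi$'s and absolute convergence). The key step is to show that $F$ obeys $\partial_{q,x}(1-b\,\eta_y)\{F\}=\partial_{q,y}(1-a\,\eta_x)\{F\}$. Since each term $\Phi_{n+m}^{(a,b)}(x,y|q)$ satisfies exactly this equation by Proposition~\ref{liupdepp} (with $\alpha=a$, $\beta=b$), and the operators act termwise, $F$ inherits the equation. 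By Theorem~\ref{pliuthm} with $k=1$, there is a sequence $\{\mu_n\}$ with $F(x,y)=\sum_{n=0}^\infty \mu_n\,\Phi_n^{(a,b)}(x,y|q)$.

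To determine the $\mu_n$, I would exploit the fact that the expansion theorem's coefficients are recovered by the specialization used in its proof, namely by setting $x=0$ and reading off the $y$-power series: $\Phi_n^{(a,b)}(0,y|q)=(b;q)_n y^n$, so $F(0,y)=\sum_n \mu_n (b;q)_n y^n$. On the other side, setting $x=0$ directly gives $F(0,y)=\sum_{n=0}^\infty (b;q)_{n+m}\,y^{n+m} t^{n+m}/(q;q)_n$, since $\Phi_{n+m}^{(a,b)}(0,y|q)=(b;q)_{n+m}y^{n+m}$. Matching coefficients of $y^N$ yields $\mu_N$ explicitly: only the term $n+m=N$ contributes, so $\mu_N=0$ for $N<m$ and $\mu_N=(b;q)_N t^N/\big((q;q)_{N-m}(b;q)_N\big)=t^N/(q;q)_{N-m}$ for $N\ge m$. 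Thus $F(x,y)=\sum_{N\ge m}\frac{t^N}{(q;q)_{N-m}}\Phi_N^{(a,b)}(x,y|q)$, which merely recovers the definition of $F$ and confirms consistency, but is not yet the right-hand side.

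The real content, and the step I expect to be the main obstacle, is converting the expansion $\sum_{N\ge m} t^N\Phi_N^{(a,b)}(x,y|q)/(q;q)_{N-m}$ into the closed product-times-${_3\phi_2}$ form. The cleanest route is to insert the definition $\Phi_N^{(a,b)}(x,y|q)=\sum_k {N\brack k}_q (a;q)_k(b;q)_{N-k}x^k y^{N-k}$, interchange the order of summation, and evaluate the inner sums. After substituting $N=n+m$ and reorganizing, one reduces the double sum to $q$-binomial theorem evaluations of the form $\sum (a;q)_k (xt)^k/(q;q)_k = (axt;q)_\infty/(xt;q)_\infty$, which produce the prefactor $(axt,byt;q)_\infty/(xt,yt;q)_\infty$, while the remaining finite sum over the "defect" caused by the shift $m$ assembles into the terminating ${_3\phi_2}$ with numerator parameters $q^{-m},xt,yt$ and denominator $axt,byt$. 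Care with the $q$-binomial identities tying ${N\brack k}_q$ to the factor $1/(q;q)_{N-m}$ and correct handling of the $q^{-m}$ that encodes the truncation are where the bookkeeping is most delicate; I would verify the terminating parameter by checking the $m=0$ degenerate case, where the ${_3\phi_2}$ collapses to $1$ and the identity becomes a pure product, serving as a sanity check on the normalization.
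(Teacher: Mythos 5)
Your use of Theorem~\ref{pliuthm} is inverted relative to the paper's, and as you yourself observe, this makes it circular: applying the expansion theorem to the \emph{left}-hand side and then specializing $x=0$ can only return the coefficients $t^N/(q;q)_{N-m}$ you started with, so the theorem contributes nothing. The paper instead defines $f(x,y)$ to be the \emph{right}-hand side (the infinite product times the terminating ${}_3\phi_2$), verifies by direct computation that this closed form satisfies $\partial_{q,x}(1-b\eta_y)\{f\}=\partial_{q,y}(1-a\eta_x)\{f\}$, expands it as $\sum_n\lambda_n\Phi_n^{(a,b)}(x,y|q)$, and then determines $\lambda_n$ by setting $x=0$: the ${}_3\phi_2$ collapses to a ${}_2\phi_1$, and equating coefficients of $y^n$ via the $q$-binomial theorem gives $\lambda_n=\frac{t^n}{(q;q)_n}\sum_{k=0}^m\frac{(q^{-m};q)_k}{(q;q)_k}q^{k(1+n)}=\frac{t^n}{(q;q)_{n-m}}$, which vanishes for $n<m$ and yields the left-hand side after the shift $n\mapsto n+m$. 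That is where the expansion theorem earns its keep.

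Having discarded the theorem, you shift the entire burden onto a direct interchange-of-summation argument, which is exactly the Verma--Jain route the paper says it is deliberately avoiding. That route does work, but your sketch omits the one identity that makes it go: after writing $N=k+l$ the left-hand side becomes $\sum_{k,l}\frac{(q;q)_{k+l}}{(q;q)_{k+l-m}}\frac{(a;q)_k(b;q)_l}{(q;q)_k(q;q)_l}(xt)^k(yt)^l$, and you must expand $\frac{(q;q)_{k+l}}{(q;q)_{k+l-m}}=\sum_{j=0}^m\frac{(q^{-m};q)_j}{(q;q)_j}q^{j(1+k+l)}$ (the terminating case of the $q$-binomial theorem, i.e.\ the same evaluation the paper uses, read backwards). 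Only after inserting this and summing the $k$- and $l$-series by the $q$-binomial theorem at argument $q^j$ does the prefactor $\frac{(axt,byt;q)_\infty}{(xt,yt;q)_\infty}$ factor out and the $j$-sum become the stated ${}_3\phi_2$. Without naming this step, the claim that the ``defect'' sum ``assembles into'' the ${}_3\phi_2$ is an assertion, not a proof; with it, your argument is complete but is a different (and, from the paper's standpoint, less illustrative) proof.
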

\begin{proof} \smartqed
Denote the right-hand side of the above equation by $f(x, y).$ It is easily seen that $f(x, y)$ is
an analytic function of $x$ and $y$, for $\max\{|xt|, |yt|\}<1.$  Thus, $f(x, y)$ is analytic at $(0, 0)\in \mathbb{C}^2.$
A direct computation shows that
\begin{eqnarray*}
&&\partial_{q, x}\{f(x, y)-bf(x, qy)\}=\partial_{q, y}\{f(x, y)-af(qx, y)\}\\
&&=(1-a)(1-b)t \frac{(axtq, bytq; q)_\infty}{(xt, yt; q)_\infty}{_3\phi_2}\left({{q^{-m}, xt, yt}\atop{axtq, bytq}}; q, q^2 \right).
\end{eqnarray*}
 Thus, by Theorem~~\ref{pliuthm},  there exists a sequence $\{\lambda_n\}$ independent of $x$ and $y$ such that
 \[
\frac{(axt, byt; q)_\infty}{(xt, yt; q)_\infty}
{_3\phi_2}\left({{q^{-m}, xt, yt}\atop{axt, byt}}; q, q \right)=\sum_{n=0}^\infty \lambda_n \Phi_n^{(a, b)}(x, y|q).
 \]
 Putting $x=0$ in this equation, using $\Phi_n^{(a, b)}( 0, y|q)=(b; q)_n y^n$, we find that
  \[
\frac{( byt; q)_\infty}{(yt; q)_\infty}
{_2\phi_1}\left({{q^{-m},  yt}\atop{byt}}; q, q \right)=\sum_{n=0}^\infty \lambda_n (b; q)_n y^n.
 \]
 Equating the coefficients of $y^n$ and using the $q$-binomial theorem, we find that
 \begin{eqnarray*}
 \lambda_n=\frac{t^n}{(q; q)_n} \sum_{k=0}^m \frac{(q^{-m}; q)_k}{(q; q)_k} q^{k(1+n)}
 =\frac{t^n}{(q; q)_{n-m}}.
 \end{eqnarray*}
 It follows that
 \begin{eqnarray*}
 \sum_{n=0}^\infty \lambda_n \Phi_n^{(a, b)}(x, y|q)
 &&=\sum_{n=0}^\infty \frac{t^n}{(q; q)_{n-m}} \Phi_n^{(a, b)}(x, y|q)\\
 &&=\sum_{n=m}^\infty \frac{t^n}{(q; q)_{n-m}} \Phi_n^{(a, b)}(x, y|q).
 \end{eqnarray*}
 Making the variable change $n-m$ to $n,$ we complete the proof of the theorem.
 \qed
\end{proof}
When $m=0,$ Theorem~\ref{gthma} reduces to the following proposition, which can
be obtained easily by multiplying two copies of the $q$-binomial theorem together.
\begin{prop} \label{gthmb}If $\max\{|xt|, |yt|\}<1,$ then,  we have the formula
\[
\sum_{n=0}^\infty \Phi_n^{(a, b)} (x, y|q) \frac{t^n}{(q; q)_n}
=\frac{(axt, byt; q)_\infty}{(xt, yt; q)_\infty}.
\]
\end{prop}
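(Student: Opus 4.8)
The plan is to obtain the identity directly as a Cauchy product of two instances of the $q$-binomial theorem, as suggested by the right-hand side factoring completely into separate products in the variables $x$ and $y$. Recall the $q$-binomial theorem in the form $\sum_{n=0}^\infty \frac{(a;q)_n}{(q;q)_n} z^n = \frac{(az;q)_\infty}{(z;q)_\infty}$ for $|z|<1$. Applying this with $z=xt$ and parameter $a$, and separately with $z=yt$ and parameter $b$, yields the two absolutely convergent series $\sum_{k\ge 0}\frac{(a;q)_k}{(q;q)_k}(xt)^k = \frac{(axt;q)_\infty}{(xt;q)_\infty}$ and $\sum_{l\ge 0}\frac{(b;q)_l}{(q;q)_l}(yt)^l = \frac{(byt;q)_\infty}{(yt;q)_\infty}$, both valid under the hypothesis $\max\{|xt|,|yt|\}<1$.

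First I would multiply these two series. Since both converge absolutely in the stated region, their Cauchy product converges absolutely and may be rearranged freely. Collecting the terms with $k+l=n$, the coefficient of $t^n$ in the product is $\sum_{k=0}^n \frac{(a;q)_k (b;q)_{n-k}}{(q;q)_k (q;q)_{n-k}} x^k y^{n-k}$.

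Next I would identify this inner sum. From Definition~\ref{hvjpolydefn} together with ${n\brack k}_q = \frac{(q;q)_n}{(q;q)_k (q;q)_{n-k}}$, dividing $\Phi_n^{(a,b)}(x,y|q)$ by $(q;q)_n$ gives exactly $\frac{\Phi_n^{(a,b)}(x,y|q)}{(q;q)_n} = \sum_{k=0}^n \frac{(a;q)_k (b;q)_{n-k}}{(q;q)_k (q;q)_{n-k}} x^k y^{n-k}$. Substituting this identification into the product completes the argument, producing $\frac{(axt,byt;q)_\infty}{(xt,yt;q)_\infty} = \sum_{n=0}^\infty \Phi_n^{(a,b)}(x,y|q)\frac{t^n}{(q;q)_n}$.

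The only delicate point is the justification of the term-by-term rearrangement of the double product into a single power series in $t$; this is the one genuine obstacle, and it is resolved by the absolute convergence of both $q$-binomial series throughout the region $\max\{|xt|,|yt|\}<1$. Alternatively, and in keeping with the method of this paper, one could derive the proposition from Theorem~\ref{pliuthm}: the right-hand side $f(x,y)$ is analytic at $(0,0)$ in the stated region, a short $q$-derivative computation shows it satisfies $\partial_{q,x}(1-b\eta_y)\{f\}=\partial_{q,y}(1-a\eta_x)\{f\}$, so it expands as $\sum_n \lambda_n \Phi_n^{(a,b)}(x,y|q)$, and setting $x=0$ together with $\Phi_n^{(a,b)}(0,y|q)=(b;q)_n y^n$ and the $q$-binomial theorem pins down $\lambda_n = t^n/(q;q)_n$.
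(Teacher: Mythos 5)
Your proof is correct and follows exactly the route the paper itself indicates: the paper states that Proposition~\ref{gthmb} ``can be obtained easily by multiplying two copies of the $q$-binomial theorem together'' (and alternatively obtains it as the $m=0$ case of Theorem~\ref{gthma}), and your Cauchy-product argument simply supplies the details of that multiplication. Nothing further is needed.
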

Next we will use Theorem~\ref{pliuthm} to prove the following theorem.
\begin{thm}\label{gthmc} If $\max\{|c|, |ab|, |tx|, |ty|\}<1$ and $cd=ab,$ then, we have
\begin{eqnarray*}
\sum_{n=0}^\infty \frac{(c; q)_n \Phi_n^{(a, b)}(x, y|q)t^n}{(q, ab; q)_n}
=\frac{(c, axt, byt; q)_\infty}{(ab, tx, ty; q)_\infty}
{_3\phi_2 \left({{d, xt, yt}\atop{axt, byt}}; q, c \right)}.
\end{eqnarray*}
\end{thm}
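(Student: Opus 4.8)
The plan is to mimic the proof of Theorem~\ref{gthma}: write $f(x,y)$ for the right-hand side, show that it is analytic at the origin and that it satisfies the $q$-partial differential equation attached to $\Phi_n^{(a,b)}$, expand it via Theorem~\ref{pliuthm}, and then identify the coefficients by specializing one variable. First I would record that, under the hypotheses $\max\{|c|,|ab|,|tx|,|ty|\}<1$, the products $(ab;q)_\infty,(tx;q)_\infty,(ty;q)_\infty$ appearing in the denominator are nonzero and the ${_3\phi_2}$ (whose argument is $c$) converges; hence $f$ is analytic separately in $x$ and in $y$ near the origin, and Hartogs' theorem (Theorem~\ref{hartogthm}) upgrades this to joint analyticity at $(0,0)\in\mathbb{C}^2$.

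The crucial step is to verify that
\[
\partial_{q, x}\{f(x, y)-bf(x, qy)\}=\partial_{q, y}\{f(x, y)-af(qx, y)\}.
\]
Rather than differentiate a ${_3\phi_2}$ directly, I would expand the series and use $(axt;q)_\infty/(axt;q)_m=(axtq^m;q)_\infty$ together with $(xt;q)_m/(xt;q)_\infty=1/(xtq^m;q)_\infty$ to rewrite
\[
f(x,y)=\frac{(c;q)_\infty}{(ab;q)_\infty}\sum_{m=0}^\infty \frac{(d;q)_m}{(q;q)_m}c^m\,\frac{(axtq^m, bytq^m;q)_\infty}{(xtq^m, ytq^m;q)_\infty}.
\]
By Proposition~\ref{gthmb} (with $t$ replaced by $tq^m$) each summand equals $\sum_{n\ge0}\Phi_n^{(a,b)}(x,y|q)(tq^m)^n/(q;q)_n$, and each $\Phi_n^{(a,b)}$ satisfies the above equation by Proposition~\ref{liupdepp}; since $\partial_{q,x},\partial_{q,y}$ and the shifts $\eta_x,\eta_y$ are linear and act termwise on these convergent power series, $f$ satisfies it as well. (One could instead carry out the direct computation, as is done for Theorem~\ref{gthma}.) Theorem~\ref{pliuthm} then furnishes a sequence $\{\lambda_n\}$, independent of $x$ and $y$, with $f(x,y)=\sum_{n\ge0}\lambda_n\Phi_n^{(a,b)}(x,y|q)$.

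Finally I would determine $\lambda_n$ by putting $x=0$. Then $(xt;q)_m$ and $(axt;q)_m$ both reduce to $1$, the ${_3\phi_2}$ collapses to a ${_2\phi_1}$, and using $\Phi_n^{(a,b)}(0,y|q)=(b;q)_n y^n$ gives
\[
\sum_{n=0}^\infty \lambda_n (b;q)_n y^n=\frac{(c, byt;q)_\infty}{(ab, ty;q)_\infty}\,{_2\phi_1}\left({{d, yt}\atop{byt}}; q, c\right).
\]
The main obstacle is to turn this right-hand side into a transparent power series in $y$. The key point is that, because $cd=ab$, it is precisely the Heine transform of ${_2\phi_1}\left({{c, b}\atop{ab}}; q, ty\right)$, so the right-hand side equals $\sum_{n\ge0}(c,b;q)_n(ty)^n/(q,ab;q)_n$. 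Comparing coefficients of $y^n$ then yields $\lambda_n=\frac{(c;q)_n t^n}{(q, ab;q)_n}$, which upon substitution gives the asserted formula; specializing $y=0$ instead, with $\Phi_n^{(a,b)}(x,0|q)=(a;q)_n x^n$, reproduces the same $\lambda_n$ and serves as a consistency check.
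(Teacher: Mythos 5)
Your proposal is correct and follows essentially the same route as the paper: denote the right-hand side by $f(x,y)$, check analyticity and the $q$-partial differential equation, invoke Theorem~\ref{pliuthm}, and then determine $\lambda_n$ by specializing one variable and applying Heine's transformation together with $cd=ab$ to get $\lambda_n=(c;q)_n t^n/(q,ab;q)_n$. The only deviations are cosmetic: you verify the $q$-PDE by decomposing $f$ into a $c$-power series of the generating functions from Proposition~\ref{gthmb} (each term handled by Proposition~\ref{liupdepp}) where the paper simply records the result of a direct computation, and you set $x=0$ where the paper sets $y=0$; both specializations yield the same coefficients.
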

\begin{proof}
\smartqed
If we use $f(x, y)$ to denote the right-hand side of the above equation, then, using the ratio test, we find that $f(x, y)$ is
an analytic function of $x$ and $y$, for $\max\{|c|, |ab|, |xt|, |yt|\}<1.$  Thus, $f(x, y)$ is analytic at $(0, 0)\in \mathbb{C}^2.$
A direct computation shows that
\begin{eqnarray*}
&&\partial_{q, x}\{f(x, y)-bf(x, qy)\}=\partial_{q, y}\{f(x, y)-af(qx, y)\}\\
&&=(1-a)(1-b)t \frac{(c, axtq, bytq; q)_\infty}{(ab, tx, ty; q)_\infty}
{_3\phi_2 \left({{d, xt, yt}\atop{qaxt, qbyt}}; q, qc \right)}.
\end{eqnarray*}
Thus, by Theorem~~\ref{pliuthm},  there exists a sequence $\{\lambda_n\}$ independent of $x$ and $y$ such that
 \[
\frac{(c, axt, byt; q)_\infty}{(ab, tx, ty; q)_\infty}
{_3\phi_2 \left({{d, xt, yt}\atop{axt, byt}}; q, c \right)}=\sum_{n=0}^\infty \lambda_n \Phi_n^{(a, b)}(x, y|q).
 \]
 Setting $y=0$ in this equation, using $\Phi_n^{(a, b)}( x, 0|q)=(a; q)_n x^n$, we find that
  \[
\frac{(c, axt; q)_\infty}{(ab, tx; q)_\infty}
{_2\phi_1 \left({{d, xt}\atop{axt}}; q, c \right)}=\sum_{n=0}^\infty \lambda_n (a; q)_n x^n.
 \]
 Using the Heine transformation formula and noting that $ab=cd$, we have
 \[
 \frac{(c, axt; q)_\infty}{(ab, tx; q)_\infty}
{_2\phi_1 \left({{d, xt}\atop{axt}}; q, c \right)}={_2\phi_1 \left({{a, c}\atop{ab}}; q, xt \right)}.
 \]
 Comparing the above two equations, we find that
 $
 (q, ab; q)_n \lambda_n=(c; q)_n t^n.
 $
 This completes the proof of Theorem~\ref{gthmc}.
 \qed
\end{proof}
\section{The proof of Theorem~\ref{AndrewsLauricella}}
\label{sec:4}
\begin{proof}
\smartqed
If we use  $f(x_1, y_1, \ldots, x_k, y_k)$ to denote the right-hand side of the equation in Theorem~\ref{AndrewsLauricella}, then,
using the ratio test, we find that $f$ is an analytic function of
$x_1, y_1, \ldots, x_k, y_k$ for $\max\{|a|, |c|, |x_1|, |y_1|, \ldots, |x_k|, |y_k|\}<1.$
By a direct computation, we deduce that for $j=1, 2\ldots, k$,
\begin{eqnarray*}
&&\partial_{q, x_j}(1-\beta_j \eta_{y_j})\{f\}=\partial_{q, y_j}(1-\alpha_j \eta_{x_j})\{f\}\\
&&=\frac{(1-\alpha_j)(1-\beta_j)(a, \alpha_1 x_1, \beta_1 y_1, \ldots, q\alpha_j x_j, q\beta_j y_j, \ldots \alpha_k x_k, \beta_k y_k; q)_\infty}
{(c, x_1, y_1, \ldots, x_j, y_j, \ldots, x_k, y_k; q)_\infty}\\
&&\qquad \times{_{2k+1}\phi_{2k}}\left({{c/a, x_1, y_1, \ldots,  x_j, y_j, \ldots, x_k, y_k}
\atop{\alpha_1 x_1, \beta_1 y_1, \ldots, q\alpha_j x_j, q\beta_j y_j, \ldots, \alpha_k x_k, \beta_k y_k}}; q, qa\right).
\end{eqnarray*}
Thus, by  Theorem~\ref{pliuthm}, there exists a sequence $\{\lambda_{n_1, \ldots, n_k}\}$ independent
of $x_1, y_1, \ldots, x_k, y_k$ such that
\begin{eqnarray}
&&f(x_1, y_1, \ldots, x_k, y_k)\nonumber\\
&&=\sum_{n_1, \ldots, n_k=0}^\infty \lambda_{n_1, \ldots, n_k}
\Phi^{(\alpha_1, \beta_1)}_{n_1}(x_1, y_1|q) )\cdots \Phi^{(\alpha_k, \beta_k)}_{n_k}(x_k, y_k|q).
\label{mgeqn}
\end{eqnarray}
Setting $x_1=x_2\cdots=x_k=0$ in this equation, we immediately deduce that
\begin{eqnarray}
&&\sum_{n_1, \ldots, n_k=0}^\infty \lambda_{n_1, \ldots, n_k}
(\beta_1; q)_{n_1}\cdots (\beta_k; q)_{n_k}y_{1}^{n_1}\cdots y_{k}^{n_k}\nonumber\\
&&=\frac{(a, \beta_1y_1, \ldots, \beta_k y_k; q)_\infty}{(c, y_1, \ldots, y_k; q)_\infty}
{_{k+1}\phi_{k}}\left({{c/a, y_1, \ldots, y_k}
\atop{\beta_1 y_1, \ldots, \beta_k y_k}}; q, a\right).
\label{mgeqn1}
\end{eqnarray}
Applying the Andrews identity in Proposition~\ref{Andrewspp} to the right-hand side of the above equation, we obtain
\begin{eqnarray}
&&\sum_{n_1, \ldots, n_k=0}^\infty \lambda_{n_1, \ldots, n_k}
(\beta_1; q)_{n_1}\cdots (\beta_k; q)_{n_k}y_{1}^{n_1}\cdots y_{k}^{n_k}\nonumber\\
&&=\sum_{n_1, \ldots, n_k=0}^\infty \frac{(a; q)_{n_1+\cdots+n_k}(\beta_1; q)_{n_1}\cdots (\beta_k; q)_{n_k}
y_1^{n_1} \cdots y_k^{n_k}}
{(c; q)_{n_1+\cdots+n_k}(q; q)_{n_1}\cdots (q; q)_{n_k}}.
\label{mgeqn11}
\end{eqnarray}
Equating the coefficients of $y_{1}^{n_1}\cdots y_{k}^{n_k}$ on both sides of the equation, we find that
\[
\lambda_{n_1, \ldots, n_k}=\frac{(a; q)_{n_1+n_2+\cdots+n_k}}
{(c; q)_{n_1+\cdots+n_k}(q; q)_{n_1}(q; q)_{n_2}\cdots (q; q)_{n_k}}.
\]
Substituting this into (\ref{mgeqn}), we complete the proof of
Theorem~\ref{AndrewsLauricella}.
\qed
\end{proof}
\section{Another multilinear generating function}
\label{sec:5}
In this section we will discuss some applications of Theorem~\ref{pliuthm} to $q$-beta integrals.
The Jackson $q$-integral of the function $f(x)$ from $a$ to $b$ is defined as
\[
\int_{a}^b f(x) d_q x=(1-q)\sum_{n=0}^\infty [b f(bq^n)-af(aq^n)]q^n.
\]
Using the $q$-integral notation, one can write some $q$-formulas in more compact forms.
For example, Sears' nonterminating extension of the $q$-Saalsch\"{u}tz summation can be
rewritten in the following beautiful form, which was first noticed by Al-Salam and Verma \cite{SalamVerma}.
\begin{prop}\label{salverpp} If there are no zero factors in the denominator of the integral
and $\max\{|a|, |b|, |cx|, |cy|, |ax/y|, |by/x|\}<1$, then, we have
 \[
 \int_{x}^y \frac{(qz/x, qz/y, abcz; q)_\infty}{(az/y, bz/x, cz; q)_\infty}d_q z
 =\frac{(1-q)y(q, x/y, qy/x,  ab, acx, bcy; q)_\infty}{(ax/y, by/x, a, b, cx, cy; q)_\infty}.
 \]
\end{prop}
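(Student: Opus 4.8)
The plan is to reduce the $q$-integral on the left to Sears' nonterminating extension of the $q$-Saalsch\"{u}tz summation, which is the form in which Al-Salam and Verma originally recognized it. Let $g(z)$ denote the integrand. First I would unfold the Jackson $q$-integral straight from its definition, writing
\[
\int_x^y g(z)\, d_q z = (1-q)\sum_{n=0}^\infty\left[\, y\, g(yq^n) - x\, g(xq^n)\,\right] q^n .
\]
This immediately splits the integral into an ``upper endpoint'' sum and a ``lower endpoint'' sum, each indexed by $n$, so the whole problem becomes the evaluation of these two series.

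Next I would evaluate $g$ at $z=yq^n$ and $z=xq^n$ and factor out the $n=0$ infinite products. Applying the elementary identity $(Aq^n;q)_\infty=(A;q)_\infty/(A;q)_n$ to each of the six factors, one finds that the upper-endpoint sum collapses to
\[
(1-q)\,y\,\frac{(q, qy/x, abcy; q)_\infty}{(a, by/x, cy; q)_\infty}\; {_3\phi_2}\!\left({a,\; by/x,\; cy \atop qy/x,\; abcy}; q, q\right),
\]
and the lower-endpoint sum to the same expression with $x$ and $y$ interchanged and an overall minus sign. A short check shows each $_3\phi_2$ is balanced: for the first, the upper-parameter product is $abcy^2/x$ and the lower-parameter product is $abcy^2q/x$, so that $b_1 b_2 = q\, a_1 a_2 a_3$. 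The restriction $\max\{|a|,|b|,|cx|,|cy|,|ax/y|,|by/x|\}<1$ is precisely what guarantees convergence of both $_3\phi_2(\cdot;q,q)$ series and boundedness of the prefactors.

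The decisive step is to recognize these two balanced series as the pair linked by Sears' two-term nonterminating $q$-Saalsch\"{u}tz relation. Writing the first with lower parameters $d=qy/x$ and $e=abcy$, its Sears companion has upper parameters $aq/d=ax/y$, $(by/x)q/d=b$, $(cy)q/d=cx$ and lower parameters $q^2/d=qx/y$, $eq/d=abcx$, which is exactly the second series. Applying Sears' relation therefore replaces the difference of the two $_3\phi_2$'s by a single infinite product, and after collecting the endpoint prefactors this product should simplify to the claimed right-hand side.

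I expect the main obstacle to be bookkeeping rather than anything conceptual: one must track the six $q$-shifted factorials through the two endpoint substitutions without sign or shift errors, and then verify that the prefactors produced by the two endpoints combine with the output of Sears' formula to yield exactly $\frac{(1-q)y(q, x/y, qy/x, ab, acx, bcy; q)_\infty}{(ax/y, by/x, a, b, cx, cy; q)_\infty}$. A secondary subtlety requiring care is the $x$--$y$ asymmetry: the two endpoint contributions enter unsymmetrically (one carries the factor $y$, the other $x$ with a minus sign), and one should confirm that this asymmetry is reproduced exactly by the two terms of Sears' relation, so that the companion series is correctly identified rather than merely formally matched.
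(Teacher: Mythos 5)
Your argument is correct, and it is essentially the intended one: the paper does not prove Proposition~\ref{salverpp} at all, but simply records it as the Al-Salam--Verma rewriting of Sears' nonterminating $q$-Saalsch\"utz summation, and your derivation is exactly that reduction carried out in the forward direction. The two endpoint series are indeed the balanced ${_3\phi_2}$'s you identify, and with $e=qy/x$, $f=abcy$ in the standard two-term Sears relation the companion parameters $aq/e=ax/y$, $bq/e=b$, $cq/e=cx$, $q^2/e=qx/y$, $qf/e=abcx$ come out as you state. The ``bookkeeping'' you defer does close: the coefficient of the companion series in Sears' relation, multiplied by the upper-endpoint prefactor, equals $-x(q,qx/y,abcx;q)_\infty/(ax/y,b,cx;q)_\infty$ precisely because $-x(y/x;q)_\infty(qx/y;q)_\infty=(y-x)(qx/y,qy/x;q)_\infty=y(x/y;q)_\infty(qy/x;q)_\infty$, and the right-hand side of Sears' relation then yields $(x/y,ab,acx,bcy;q)_\infty/(ax/y,b,cx,abcy;q)_\infty$, giving exactly the claimed product. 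Two small points to tighten: the lower-endpoint sum is obtained from the upper one by the simultaneous swap $(x,a)\leftrightarrow(y,b)$, not by interchanging $x$ and $y$ alone (the integrand is invariant only under the joint swap); and the ${_3\phi_2}(\cdot;q,q)$ series converge for any $|q|<1$ irrespective of the stated parameter bounds, which serve instead to keep the infinite products nonzero and Sears' relation applicable.
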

Using this proposition we can find the following $q$-integral representation  for $\Phi_k^{(a, b)}(x, y|q)$.
\begin{prop} \label{qintrep} If there are no zero factors in the denominator of the integral, then, we have
\begin{eqnarray*}
&&\Phi_k^{(a, b)} (x, y|q)\\
&&=\frac{(ab; q)_k(a, b, by/x, ax/y; q)_\infty}{(1-q)y(q, ab, x/y, qy/x; q)_\infty}
\int_{x}^y \frac{(qz/x, qz/y; q)_\infty z^k}{(bz/x, az/y; q)_\infty}d_q z.
\end{eqnarray*}
\end{prop}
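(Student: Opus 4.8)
The plan is to derive the claimed representation directly from the Al-Salam--Verma integral evaluation in Proposition~\ref{salverpp} by treating the parameter $c$ there as a free variable and comparing the Taylor coefficients in $c$ on the two sides. The crucial observation is that the integrand in Proposition~\ref{salverpp} differs from the integrand in Proposition~\ref{qintrep} precisely by the factor $(abcz; q)_\infty/(cz; q)_\infty$, whereas on the right-hand side of Proposition~\ref{salverpp} the only $c$-dependent factor is $(acx, bcy; q)_\infty/(cx, cy; q)_\infty$. Both of these factors are governed by the $q$-binomial theorem, so extracting the coefficient of $c^k$ should convert Proposition~\ref{salverpp} into exactly the desired identity.

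First I would expand the extra factor in the integrand by the $q$-binomial theorem,
\[
\frac{(abcz; q)_\infty}{(cz; q)_\infty}=\sum_{n=0}^\infty \frac{(ab; q)_n}{(q; q)_n}(cz)^n,
\]
insert it into the left-hand side of Proposition~\ref{salverpp}, and interchange the summation with the Jackson $q$-integral. The coefficient of $c^k$ on the left then becomes
\[
\frac{(ab; q)_k}{(q; q)_k}\int_{x}^y \frac{(qz/x, qz/y; q)_\infty z^k}{(bz/x, az/y; q)_\infty}\,d_q z,
\]
which is, up to the prefactor $(ab;q)_k/(q;q)_k$, precisely the $q$-integral in the statement.

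Next I would expand the $c$-dependent part of the right-hand side of Proposition~\ref{salverpp} by two applications of the $q$-binomial theorem,
\[
\frac{(acx; q)_\infty}{(cx; q)_\infty}\frac{(bcy; q)_\infty}{(cy; q)_\infty}
=\sum_{i, j=0}^\infty \frac{(a; q)_i (b; q)_j}{(q; q)_i (q; q)_j}\, x^i y^j c^{i+j}.
\]
Collecting the terms with $i+j=k$ and using ${k\brack i}_q=(q;q)_k/[(q;q)_i (q;q)_{k-i}]$, the coefficient of $c^k$ collapses into $\Phi_k^{(a, b)}(x, y|q)/(q; q)_k$ by Definition~\ref{hvjpolydefn}. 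Equating the two coefficients of $c^k$, cancelling the common factor $(q; q)_k$, and reading off the $c$-free constant $(1-q)y\,(q, x/y, qy/x, ab; q)_\infty/(ax/y, by/x, a, b; q)_\infty$ from Proposition~\ref{salverpp}, I can then solve for $\Phi_k^{(a, b)}(x, y|q)$ to obtain the asserted formula.

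The hard part will be justifying the interchange of the $c$-summation with the Jackson $q$-integral (equivalently, with the defining infinite sum of the $q$-integral). I expect this to be the only genuine obstacle, and I would settle it by absolute convergence: under the hypotheses inherited from Proposition~\ref{salverpp} the resulting double series converges absolutely on the relevant domain, so termwise $q$-integration is legitimate. Everything else reduces to the $q$-binomial theorem together with routine bookkeeping of $q$-shifted factorials.
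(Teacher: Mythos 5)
Your proposal is correct and follows essentially the same route as the paper: both start from the Al-Salam--Verma evaluation in Proposition~\ref{salverpp} and extract the coefficient of $c^k$, the only difference being that the paper packages the right-hand side via the generating function of Proposition~\ref{gthmb} and isolates the coefficient by applying $\partial_{q,c}^k$ and setting $c=0$, while you expand both sides directly by the $q$-binomial theorem and compare Taylor coefficients in $c$. Your justification of the sum--$q$-integral interchange by absolute convergence (guaranteed by $|cx|,|cy|<1$) is the right way to close the one technical gap.
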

\begin{proof} \smartqed
Using the generating function for $\Phi_n^{(a, b)}(x, y|q)$ in Proposition~\ref{gthmb}, we
find that
\[
\frac{( acx, bcy; q)_\infty}{(cx, cy; q)_\infty}
=\sum_{n=0}^\infty \frac{\Phi_n^{(a, b)}(x, y|q)c^n}{(q; q)_n}.
\]
It follows that
\begin{eqnarray*}
&&\int_{x}^y \frac{(qz/x, qz/y, abcz; q)_\infty}{(az/y, bz/x, cz; q)_\infty}d_q z\\
&&=\frac{(1-q)y(q, x/y, qy/x,  ab; q)_\infty}{(ax/y, by/x, a, b; q)_\infty}
\sum_{n=0}^\infty \frac{\Phi_n^{(a, b)}(x, y|q)c^n}{(q; q)_n}.
\end{eqnarray*}
Applying the $q$-partial derivative operator $\mathcal{\partial}_{q, c}^n$ to act both sides of the  equation, we deduce that
\begin{eqnarray*}
&& (ab; q)_k\int_{x}^y \frac{(qz/x, qz/y, q^kabcz; q)_\infty}{(az/y, bz/x, cz; q)_\infty}d_q z\\
&&=\frac{(1-q)y(q, x/y, qy/x,  ab; q)_\infty}{(ax/y, by/x, a, b; q)_\infty}
\sum_{n=k}^\infty \frac{\Phi_n^{(a, b)}(x, y|q)c^{n-k}}{(q; q)_{n-k}}.
\end{eqnarray*}
Setting $c=0$ in this equation, we complete the proof of Proposition~\ref{qintrep}.  \qed
\end{proof}
Putting $a=b=0$ in Proposition~\ref{qintrep}, we immediately find the following $q$-integral representation for
the the homogeneous Rogers--Szeg\H{o} polynomials.
\begin{prop}\label{rsqrpp} We have
\[
h_k(x, y|q)=\frac{1}{(1-q)y(q, x/y, qy/x; q)_\infty}
\int_{x}^y (qz/x, qz/y; q)_\infty z^k d_q z.
\]
\end{prop}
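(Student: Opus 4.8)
The plan is to obtain Proposition~\ref{rsqrpp} as the direct specialization $a=b=0$ of Proposition~\ref{qintrep}, so the entire argument reduces to checking that every $q$-shifted factorial carrying a vanishing argument collapses to~$1$. First I would recall from the discussion following Definition~\ref{hvjpolydefn} that $h_k(x, y|q)=\Phi_k^{(0, 0)}(x, y|q)$, so that the left-hand side of Proposition~\ref{qintrep} becomes precisely the left-hand side of the desired identity once $a$ and $b$ are set to zero.

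Next I would simplify the prefactor. Since $(0; q)_n=\prod_{j=0}^{n-1}(1-0\cdot q^j)=1$ for every $n$ (and likewise $(0; q)_\infty=1$), setting $a=b=0$ gives $(ab; q)_k=(0; q)_k=1$, the numerator $(a, b, by/x, ax/y; q)_\infty=(0, 0, 0, 0; q)_\infty=1$, and in the denominator $(q, ab, x/y, qy/x; q)_\infty$ reduces to $(q, x/y, qy/x; q)_\infty$. Thus the constant multiplying the integral becomes $1/[(1-q)y(q, x/y, qy/x; q)_\infty]$, which matches the asserted prefactor exactly.

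Finally I would simplify the integrand: with $a=b=0$ the denominator $(bz/x, az/y; q)_\infty$ equals $(0, 0; q)_\infty=1$, so the integrand collapses from $(qz/x, qz/y; q)_\infty z^k/(bz/x, az/y; q)_\infty$ to simply $(qz/x, qz/y; q)_\infty z^k$. Combining these three simplifications reproduces the claimed identity verbatim. There is no genuine obstacle here: the only point requiring (minimal) care is the bookkeeping that each factor with argument $0$ equals $1$ rather than being ill-defined, so the proof amounts to a single substitution into Proposition~\ref{qintrep}.
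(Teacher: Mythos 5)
Your proposal is correct and matches the paper exactly: the paper obtains Proposition~\ref{rsqrpp} precisely by putting $a=b=0$ in Proposition~\ref{qintrep}, with all the $q$-shifted factorials with argument $0$ collapsing to $1$ just as you describe.
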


Using Proposition~\ref{qintrep} and  Theorem~\ref{pliuthm}, we can prove the following
theorem.
\begin{thm}\label{mulgenfun}
For $\max_{j\in\{1,\ldots,k\}}\{ |xt|, |yt|, |xu_j|, |xv_j|, |yu_j|, |yv_j|\}<1,$ we have
\begin{eqnarray*}
&&\sum_{n_1, \ldots, n_k=0}^\infty
\frac{\Phi_{n_1}^{(\alpha_1, \beta_1)}(u_1, v_1|q)\cdots \Phi_{n_k}^{(\alpha_k, \beta_k)}(u_k, v_k|q)}{(q; q)_{n_1} \cdots (q; q)_{n_k}}\\
&&\qquad \times \int_{x}^y  z^{n_1+\cdots+n_k} \frac{(qz/x, qz/y, \gamma z t; q)_\infty}{(bz/x, az/y, zt; q)_\infty}  d_q z\\
&&=\int_{x}^y \frac{(qz/x, qz/y, \gamma zt, \alpha_1 u_1 z, \beta_1 v_1 z, \ldots, \alpha_k u_k z, \beta_k v_k z; q)_\infty}
{(bz/x, az/y, zt, zu_1, zv_1, \ldots, zu_k, zv_k; q)_\infty}d_q z.
\end{eqnarray*}
\end{thm}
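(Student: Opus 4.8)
The plan is to reduce the identity to the single-variable generating function of Proposition~\ref{gthmb} by exploiting the product structure of the integrand on the right-hand side. First I would split off the factors that do not involve the summation variables, writing the integrand as
\[
\frac{(qz/x, qz/y, \gamma zt; q)_\infty}{(bz/x, az/y, zt; q)_\infty}\prod_{j=1}^k \frac{(\alpha_j u_j z, \beta_j v_j z; q)_\infty}{(u_j z, v_j z; q)_\infty}.
\]
The first quotient is exactly the integrand appearing inside the $q$-integral on the left, so only the product over $j$ needs to be expanded.

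For each $j$ I would apply Proposition~\ref{gthmb} with the substitution $(x,y,a,b,t)\mapsto(u_j,v_j,\alpha_j,\beta_j,z)$, giving
\[
\frac{(\alpha_j u_j z, \beta_j v_j z; q)_\infty}{(u_j z, v_j z; q)_\infty}=\sum_{n_j=0}^\infty \frac{\Phi_{n_j}^{(\alpha_j,\beta_j)}(u_j,v_j|q)}{(q;q)_{n_j}}\,z^{n_j}.
\]
Multiplying the $k$ resulting series together collapses the product into the single multiple series
\[
\sum_{n_1,\ldots,n_k=0}^\infty \frac{\Phi_{n_1}^{(\alpha_1,\beta_1)}(u_1,v_1|q)\cdots \Phi_{n_k}^{(\alpha_k,\beta_k)}(u_k,v_k|q)}{(q;q)_{n_1}\cdots (q;q)_{n_k}}\,z^{n_1+\cdots+n_k}.
\]
Substituting this back into the $q$-integral and interchanging the order of the multiple summation with the integration produces precisely the left-hand side, since each term picks up the factor $z^{n_1+\cdots+n_k}$ against the remaining integrand.

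I expect the only real obstacle to be the justification of that interchange, which here is a swap between a $k$-fold sum and the Jackson integral (itself an infinite sum). Along the integration path $z$ runs through $\{xq^m\}_{m\ge0}\cup\{yq^m\}_{m\ge0}$, so $|z|\le\max\{|x|,|y|\}$ at every sample point; the hypotheses $\max\{|xu_j|,|yu_j|,|xv_j|,|yv_j|\}<1$ then guarantee $|u_jz|<1$ and $|v_jz|<1$ throughout, hence absolute convergence of each generating-function series, while $\max\{|xt|,|yt|\}<1$ keeps the remaining prefactor bounded on the integration path. With these bounds a Fubini/Tonelli argument applied to the iterated series of absolute values legitimizes the rearrangement.

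Alternatively, and in the spirit of Theorem~\ref{pliuthm}, one may let $F(u_1,v_1,\ldots,u_k,v_k)$ denote the right-hand side, differentiate under the $q$-integral sign and use Proposition~\ref{liupdepp} to check that $F$ satisfies the system $\partial_{q,u_j}(1-\beta_j\eta_{v_j})\{F\}=\partial_{q,v_j}(1-\alpha_j\eta_{u_j})\{F\}$, then invoke Theorem~\ref{pliuthm} to expand $F$ in the products $\Phi_{n_1}^{(\alpha_1,\beta_1)}(u_1,v_1|q)\cdots\Phi_{n_k}^{(\alpha_k,\beta_k)}(u_k,v_k|q)$, and finally determine the coefficients by setting $u_1=\cdots=u_k=0$ and matching powers of $v_1^{n_1}\cdots v_k^{n_k}$ via the $q$-binomial theorem; this recovers the same $q$-integral coefficients as above.
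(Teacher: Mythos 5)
Your primary argument is correct, but it is genuinely different from the paper's. The paper follows its standard template: it takes the right-hand side as $f(u_1,v_1,\ldots,u_k,v_k)$, verifies by $q$-differentiating under the $q$-integral sign that $f$ satisfies the system $\partial_{q,u_j}(1-\beta_j\eta_{v_j})\{f\}=\partial_{q,v_j}(1-\alpha_j\eta_{u_j})\{f\}$, invokes Theorem~\ref{pliuthm} to expand $f$ in the products $\Phi_{n_1}^{(\alpha_1,\beta_1)}(u_1,v_1|q)\cdots\Phi_{n_k}^{(\alpha_k,\beta_k)}(u_k,v_k|q)$, and then pins down the coefficients by setting $v_1=\cdots=v_k=0$ and applying $\partial_{q,u_1}^{n_1}\cdots\partial_{q,u_k}^{n_k}$ at the origin; this is exactly the alternative you sketch in your last paragraph (up to the symmetric choice of which variables to kill and whether one matches monomials or applies $q$-derivatives). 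Your main route instead factors the integrand, expands each factor $(\alpha_ju_jz,\beta_jv_jz;q)_\infty/(u_jz,v_jz;q)_\infty$ by Proposition~\ref{gthmb}, and interchanges the resulting $k$-fold sum with the Jackson integral. This is more elementary — it bypasses Theorem~\ref{pliuthm} entirely and makes the identity transparent as ``term-by-term integration of a product of generating functions'' — and your justification of the interchange is sound: on the sample points $z\in\{xq^m\}\cup\{yq^m\}$ one has $|z|\le\max\{|x|,|y|\}$, so the stated hypotheses give $|u_jz|,|v_jz|,|zt|<1$ uniformly, and the dominated/Fubini argument goes through (one should also note, as the paper does implicitly, the standing assumption that no factor in the denominator vanishes). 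What the paper's approach buys is uniformity of method — the theorem is meant as an illustration of the expansion theorem — and it avoids having to discuss the sum--integral interchange explicitly, at the cost of the extra verification of the $q$-PDE system and the coefficient computation. Either proof is acceptable.
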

\begin{proof} \smartqed
If we use  $f(u_1, v_1, \ldots, u_k, v_k)$ to denote the right-hand side of the equation in Theorem~\ref{mulgenfun}, then,
using the ratio test, we find that $f$ is analytic function at $(0, \ldots, 0)\in \mathbb{C}^{2k}$.
For simplicity, we  use $I(\alpha_1, \beta_1, \ldots, \alpha_k, \beta_k; z)$ to denote the integrand function.

By a direct computation, we easily deduce that for $j=1, 2\ldots, k$,
\begin{eqnarray*}
&&\partial_{q, u_j}(1-\beta_j \eta_{v_j})\{f\}=\partial_{q, v_j}(1-\alpha_j \eta_{u_j})\{f\}\\
&&=(1-\alpha_j)(1-\beta_j)
\int_{x}^y z{I(\alpha_1, \beta_1, \ldots, q\alpha_j, q\beta_j, \ldots \alpha_k, \beta_k; z)}d_q z.
\end{eqnarray*}
Thus, by  Theorem~\ref{pliuthm},  there exists a sequence $\{c_{n_1, \ldots, n_k}\}$ independent
of $u_1, v_1, \ldots, u_k, v_k$ such that
\begin{eqnarray}
&&f(u_1, v_1, \ldots, u_k, v_k) \nonumber\\
&&=\sum_{n_1, \ldots, n_k=0}^\infty c_{n_1, \ldots, n_k}
\Phi^{(\alpha_1, \beta_1)}_{n_1}(u_1, v_1|q) )\cdots \Phi^{(\alpha_k, \beta_k)}_{n_k}(u_k, v_k|q).
\label{mulgen1}
\end{eqnarray}
Putting $v_1=\cdots=v_k=0$ in this equation and noting that
$\Phi^{(\alpha_j, \beta_j)}_{n_j}(u_j, 0|q)=(\alpha_j; q)_{n_j}u_j^{n_j}$ for $j=1, 2\ldots, k$,
we find that
\begin{eqnarray*}
&&\sum_{n_1, \ldots, n_k=0}^\infty c_{n_1, \ldots, n_k} (\alpha_1; q)_{n_1}\cdots (\alpha_k; q)_{n_k}u_1^{n_1}
\cdots u_{k}^{n_k}\\
&&=\int_{x}^y \frac{(qz/x, qz/y, \gamma zt, \alpha_1 u_1 z, \ldots, \alpha_k u_k z; q)_\infty}
{(bz/x, az/y, zt, zu_1, \ldots, zu_k; q)_\infty}d_q z.
\end{eqnarray*}
Applying the operator $\mathcal{\partial}_{q, u_1}^{n_1}\cdots \mathcal{\partial}_{q, u_k}^{n_k}$ to act both sides of the  equation
and then setting $u_1=\cdots=u_k=0$, we deduce that
\begin{eqnarray*}
&&c_{n_1, \ldots, n_k} (\alpha_1; q)_{n_1}\cdots (\alpha_k; q)_{n_k} (q; q)_{n_1}\cdots (q; q)_{n_k}\\
&&=(\alpha_1; q)_{n_1}\cdots (\alpha_k; q)_{n_k}
\int_{x}^y z^{n_1+\cdots+n_k} \frac{(qz/x, qz/y, \gamma zt; q)_\infty}
{(bz/x, az/y, zt; q)_\infty}d_q z.
\end{eqnarray*}
It follows that
\begin{eqnarray*}
&&c_{n_1, \ldots, n_k} (q; q)_{n_1}\cdots (q; q)_{n_k}\\
&&=
\int_{x}^y z^{n_1+\cdots+n_k} \frac{(qz/x, qz/y, \gamma zt; q)_\infty}
{(bz/x, az/y, zt; q)_\infty}d_q z.
\end{eqnarray*}
Substituting this equation into (\ref{mulgen1}), we complete the proof of Theorem~\ref{mulgenfun}.
\qed
\end{proof}
Setting $a=b=0$ in Theorem~\ref{mulgenfun} and then equating the coefficients of $t^m$ on both sides of the resulting
equation, we conclude the following proposition, which is equivalent to \cite[Theorem~7.3]{LiuHahn}.
\begin{prop}\label{amulgenfun}
For $\max_{j\in\{1,\ldots,k\}}\{ |xt|, |yt|, |xu_j|, |xv_j|, |yu_j|, |yv_j|\}<1,$ we have
\begin{eqnarray*}
&&\sum_{n_1, \ldots, n_k=0}^\infty
\frac{\Phi_{n_1}^{(\alpha_1, \beta_1)}(u_1, v_1|q)\cdots \Phi_{n_k}^{(\alpha_k, \beta_k)}(u_k, v_k|q)}{(q; q)_{n_1} \cdots (q; q)_{n_k}}\\
&&\qquad \times \int_{x}^y  z^{m+n_1+\cdots+n_k}  (qz/x, qz/y; q)_\infty   d_q z\\
&&=\int_{x}^y \frac{z^m (qz/x, qz/y, \alpha_1 u_1 z, \beta_1 v_1 z, \ldots, \alpha_k u_k z, \beta_k v_k z; q)_\infty}
{( zu_1, zv_1, \ldots, zu_k, zv_k; q)_\infty}d_q z.
\end{eqnarray*}
\end{prop}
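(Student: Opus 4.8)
The plan is to derive Proposition~\ref{amulgenfun} directly from Theorem~\ref{mulgenfun} by the specialization announced just above its statement: put $a=b=0$ and then read off the coefficient of $t^m$.

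First I would set $a=b=0$ in Theorem~\ref{mulgenfun}. Since the parameter $\gamma$ plays the role of $ab$ (it is the $ab$ of the Al-Salam--Verma integral in Proposition~\ref{salverpp} with the variable $c$ replaced by $t$), this choice forces $\gamma=0$ as well; consequently the three factors $(bz/x;q)_\infty$, $(az/y;q)_\infty$ and $(\gamma zt;q)_\infty$ each collapse to $(0;q)_\infty=1$. In the resulting identity the variable $t$ then enters only through the single factor $1/(zt;q)_\infty$, which sits inside the integrand on both sides.

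Next I would expand that factor by the $a=0$ instance of the $q$-binomial theorem,
\[
\frac{1}{(zt;q)_\infty}=\sum_{m=0}^\infty \frac{z^m t^m}{(q;q)_m},
\]
on each side of the identity, and then interchange the $t$-summation with the Jackson $q$-integral (and, on the left, with the multiple sum over $n_1,\dots,n_k$). Extracting the coefficient of $t^m$ multiplies each side by $1/(q;q)_m$ and inserts a factor $z^m$ inside the respective $q$-integrals; cancelling the common factor $1/(q;q)_m$ then leaves precisely the two sides of Proposition~\ref{amulgenfun}.

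The only delicate point is the legitimacy of interchanging the geometric-type $t$-series with the $q$-integral (and with the $n_j$-sums on the left). Because a Jackson $q$-integral is itself a convergent infinite sum, the whole manipulation is merely a rearrangement of a multiply-indexed series, and under the hypothesis $\max_{j}\{|xt|,|yt|,|xu_j|,|xv_j|,|yu_j|,|yv_j|\}<1$ every series in sight converges absolutely. Hence Fubini's theorem for series applies, so the rearrangement and the coefficient extraction are both justified, and the main obstacle is the bookkeeping of convergence rather than any genuine analytic difficulty.
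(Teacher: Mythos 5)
Your proposal is correct and is essentially the paper's own argument: the paper disposes of this proposition in one sentence ("Setting $a=b=0$ in Theorem~\ref{mulgenfun} and then equating the coefficients of $t^m$"), and you have simply supplied the details of that coefficient extraction via the expansion $1/(zt;q)_\infty=\sum_m z^mt^m/(q;q)_m$ and the interchange of summation with the Jackson $q$-integral. Your reading of $\gamma$ as standing for $ab$ (hence vanishing with $a=b=0$) matches what the statement of Proposition~\ref{amulgenfun} requires, so there is no gap.
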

\section{A $q$-integral formula}
\label{sec:6}

The Andrews--Askey integral formula \cite[Theorem~1]{Andrews+Askey} is stated in the following proposition.
\begin{prop}\label{andaskpp} If there are no zero factors in the denominator of the integral, then,
we have
\begin{equation}
\int_{u}^v \frac{(qx/u, qx/v; q)_\infty}{(cx, dx; q)_\infty}d_q x
=\frac{(1-q)v(q, u/v, qv/u, cduv; q)_\infty}{(cu, cv, du, dv; q)_\infty}.
\label{qinteqn1}
\end{equation}
\end{prop}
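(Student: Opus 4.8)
The plan is to recognize that the Andrews–Askey formula is in fact a special case of the Al-Salam–Verma evaluation already recorded in Proposition~\ref{salverpp}, so that the entire proof reduces to a careful change of variables and a matching of parameters. Recall that Proposition~\ref{salverpp} evaluates
\[
\int_{x}^y \frac{(qz/x, qz/y, abcz; q)_\infty}{(az/y, bz/x, cz; q)_\infty}\,d_q z
=\frac{(1-q)y(q, x/y, qy/x, ab, acx, bcy; q)_\infty}{(ax/y, by/x, a, b, cx, cy; q)_\infty}.
\]
The integrand in \reff{qinteqn1} has the factor $(qx/u, qx/v; q)_\infty$ in the numerator, exactly matching the $(qz/x, qz/y; q)_\infty$ of the Al-Salam–Verma integrand under the identification of the endpoints $x\to u$, $y\to v$. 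The key is therefore to arrange the three denominator factors $(az/y, bz/x, cz; q)_\infty$ and the numerator factor $(abcz; q)_\infty$ so that the product $(cx, dx; q)_\infty$ in \reff{qinteqn1} is reproduced while the extra factor $(abcz; q)_\infty$ collapses to $1$.

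First I would set the endpoints $x=u$ and $y=v$ in Proposition~\ref{salverpp}. The integrand then reads
\[
\frac{(qz/u, qz/v, abcz; q)_\infty}{(az/v, bz/u, cz; q)_\infty}.
\]
To kill the unwanted numerator factor $(abcz; q)_\infty$ and to bring the denominator into the two-factor form $(cx,dx;q)_\infty$, the natural choice is to send one of the three denominator parameters to zero. Concretely, I would take the Al-Salam–Verma parameter called $c$ to be $0$; then $(cz;q)_\infty=1$ and $(abcz;q)_\infty=1$, leaving the integrand $(qz/u, qz/v; q)_\infty/\bigl((az/v)(bz/u);q\bigr)_\infty$. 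Renaming the surviving parameters so that $b/u$ and $a/v$ in the denominator become the desired $c$ and $d$ of \reff{qinteqn1} — that is, putting $b=cu$ and $a=dv$ — reproduces exactly the left-hand side $\int_u^v (qx/u, qx/v;q)_\infty/(cx, dx;q)_\infty\,d_q x$.

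With the substitution in place, the remaining work is purely formal: substitute $c=0$, $a=dv$, $b=cu$, $x=u$, $y=v$ into the right-hand side of Proposition~\ref{salverpp} and simplify. The factors $(ab; q)_\infty=(cduv;q)_\infty$ and $(q, x/y, qy/x;q)_\infty=(q, u/v, qv/u;q)_\infty$ appear immediately in the numerator, while $(a, b, ax/y, by/x;q)_\infty$ becomes $(dv, cu, du, cv;q)_\infty$ in the denominator, and the factors involving $c$ (namely $(cx, cy, acx, bcy;q)_\infty$) all become $1$. Matching these against the target right-hand side $(1-q)v(q, u/v, qv/u, cduv;q)_\infty/\bigl((cu,cv,du,dv;q)_\infty\bigr)$ completes the identification. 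I expect the only genuine obstacle to be bookkeeping: one must verify that setting the Al-Salam–Verma $c=0$ does not violate the convergence hypotheses (the conditions $\max\{|a|,|b|,|cx|,|cy|,|ax/y|,|by/x|\}<1$ degenerate, with the $c$-dependent constraints becoming vacuous), and that the resulting parameter constraints are exactly the nonvanishing-denominator condition stated in Proposition~\ref{andaskpp}. Once this limiting case is justified, the formula follows directly.
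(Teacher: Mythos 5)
Your derivation is correct, and it is necessarily a different route from the paper's, because the paper gives no proof of Proposition~\ref{andaskpp} at all: the formula is simply quoted from Andrews and Askey. Your specialization of Proposition~\ref{salverpp} checks out in every detail: setting the Al-Salam--Verma parameter $c$ equal to $0$, renaming the endpoints $x=u$, $y=v$, and putting $a=dv$, $b=cu$ turns the integrand into $(qz/u,qz/v;q)_\infty/(cz,dz;q)_\infty$ (the factors $(abcz;q)_\infty$ and $(cz;q)_\infty$ both collapse to $1$), while on the right-hand side $ab=cduv$, $ax/y=du$, $by/x=cv$, and the four $c$-dependent factors become $1$, yielding exactly \reff{qinteqn1}. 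Two remarks on what this buys and what it costs. First, you are deriving the Andrews--Askey integral from a strictly stronger unproved input: Proposition~\ref{salverpp} is Sears' nonterminating $q$-Saalsch\"utz summation in Al-Salam and Verma's integral form, of which \reff{qinteqn1} is the degenerate $c=0$ case, so within this paper the reduction is legitimate (nothing circular, since Proposition~\ref{salverpp} is not derived from Proposition~\ref{andaskpp}), but it inverts the usual order of difficulty relative to Andrews and Askey's original, more elementary derivation. Second, the hypotheses do not become vacuous quite as you suggest: after the substitution the surviving conditions from Proposition~\ref{salverpp} read $\max\{|cu|,|cv|,|du|,|dv|\}<1$, which is genuinely stronger than the stated nonvanishing-denominator condition. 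To recover the proposition in full generality you must add one sentence: both sides of \reff{qinteqn1} are analytic in $c$ and $d$ away from the zeros of the denominator factors (the Jackson $q$-integral is a uniformly convergent series of functions analytic in $c$ and $d$), so the identity extends by analytic continuation. With that addition the argument is complete.
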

Applying ${\partial}_{q, c}^n$ to act both sides of (\ref{qinteqn1}) and then using the $q$-Leibniz rule, one can easily find the following proposition \cite{Wang2009}.
\begin{prop}\label{wangpp}
 If there are no zero factors in the denominator of the integral, then,
we have
\begin{eqnarray*}
\int_{u}^v \frac{x^n(qx/u, qx/v; q)_\infty}{(cx, dx; q)_\infty}d_q x
&&=\frac{(1-q)v(q, u/v, qv/u, cduv; q)_\infty}{(cu, cv, du, dv; q)_\infty}\\
&&\qquad \times \sum_{j=0}^n {n\brack j}_q \frac{(cv, dv; q)_j}{(cduv; q)_j}u^jv^{n-j}.
\end{eqnarray*}
\end{prop}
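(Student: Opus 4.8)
The plan is to follow the recipe announced just before the statement: apply the $q$-partial derivative operator $\partial_{q,c}^n$ to both sides of the Andrews--Askey formula \reff{qinteqn1} and simplify each side separately. For the left-hand side, the only $c$-dependence sits in the factor $1/(cx;q)_\infty$ inside the integrand. A one-line computation using $(cx;q)_\infty=(1-cx)(qcx;q)_\infty$ gives $\partial_{q,c}\{1/(cx;q)_\infty\}=x/(cx;q)_\infty$, and iterating yields $\partial_{q,c}^n\{1/(cx;q)_\infty\}=x^n/(cx;q)_\infty$. Since the Jackson $q$-integral is by definition an absolutely convergent series in the sample points $uq^m, vq^m$, the operator $\partial_{q,c}^n$ commutes with $\int_{u}^v\cdots d_q x$, so the left-hand side turns into exactly $\int_{u}^v x^n(qx/u,qx/v;q)_\infty/(cx,dx;q)_\infty\,d_q x$, as required.

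For the right-hand side I would first isolate the $c$-free constant $C=(1-q)v(q,u/v,qv/u;q)_\infty/(du,dv;q)_\infty$ and write the remaining $c$-dependent part as a product $f(c)g(c)$ with $f(c)=(cduv;q)_\infty/(cu;q)_\infty$ and $g(c)=1/(cv;q)_\infty$. The key computation is a ladder relation for $f$: setting $f_r(c)=(q^r cduv;q)_\infty/(cu;q)_\infty$, one reads off from the quotient $f_r(c)/f_r(qc)=(1-q^r cduv)/(1-cu)$ that $\partial_{q,c}\{f_r\}=u(1-q^r dv)f_{r+1}$, whence $\partial_{q,c}^j\{f\}=\dfrac{u^j(dv;q)_j}{(cduv;q)_j}\cdot\dfrac{(cduv;q)_\infty}{(cu;q)_\infty}$. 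For $g$ one simply has $\partial_{q,c}^{n-j}\{g\}=v^{n-j}/(cv;q)_\infty$.

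Finally I would invoke the $q$-Leibniz rule in the shifted form $\partial_{q,c}^n\{f g\}=\sum_{j=0}^n {n\brack j}_q (\partial_{q,c}^j f)(c)\,(\partial_{q,c}^{n-j}g)(q^j c)$. The shift produces $(\partial_{q,c}^{n-j}g)(q^j c)=v^{n-j}(cv;q)_j/(cv;q)_\infty$, which is precisely where the factor $(cv;q)_j$ in the claimed sum originates. Multiplying the three pieces and restoring $C$ collapses the $j$-th term to $C\,\dfrac{(cduv;q)_\infty}{(cu;q)_\infty(cv;q)_\infty}{n\brack j}_q\dfrac{(cv,dv;q)_j}{(cduv;q)_j}u^j v^{n-j}$, and summing over $j$ reproduces the right-hand side of the proposition.

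The only genuinely delicate points are the choice of grouping $f\cdot g$ and the choice of shifted $q$-Leibniz rule: the symmetric factor $(cv;q)_j(dv;q)_j/(cduv;q)_j$ only drops out cleanly if the $q^j$-shift lands on $g=1/(cv;q)_\infty$ rather than on $f$, since the opposite grouping gives a superficially different (though equal) closed form. I would sanity-check the bookkeeping on the case $n=1$, where both sides reduce to $C\,(cduv;q)_\infty/[(1-cduv)(cu;q)_\infty(cv;q)_\infty]\,[\,u+v-uv(c+d)\,]$; the cancellation of the $cduv^2$ terms there is a reassuring confirmation that the shifts have been tracked correctly before committing to the general $n$.
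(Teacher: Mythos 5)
Your proposal is correct and follows exactly the route the paper indicates (the paper only sketches it in one line before the statement): apply $\partial_{q,c}^n$ to the Andrews--Askey formula \reff{qinteqn1}, commute it past the $q$-integral, and expand the $c$-dependent factor $(cduv;q)_\infty/[(cu;q)_\infty(cv;q)_\infty]$ by the $q$-Leibniz rule. All of your intermediate computations (the iteration $\partial_{q,c}^n\{1/(cx;q)_\infty\}=x^n/(cx;q)_\infty$, the ladder relation for $f_r$, and the shifted form of the $q$-Leibniz rule placing the $q^j$-shift on $g=1/(cv;q)_\infty$) check out.
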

The main result of this section is the following $q$-integral formula.
\begin{thm}\label{liuqint}If there are no zero factors in the denominator of the integral, then,
we have
\end{thm}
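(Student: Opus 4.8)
The plan is to imitate the proof of Theorem~\ref{mulgenfun}, replacing the Al-Salam--Verma kernel used there by the Andrews--Askey kernel $(qx/u,qx/v;q)_\infty/(cx,dx;q)_\infty$ and then evaluating the resulting one-dimensional $q$-integrals in closed form by Wang's formula (Proposition~\ref{wangpp}). I would let $f$ denote the $q$-integral on the right-hand side, whose integrand carries the free variables through the factors $\prod_{j=1}^k(\alpha_j s_j x,\beta_j t_j x;q)_\infty/(s_j x,t_j x;q)_\infty$ multiplying the Andrews--Askey kernel. A ratio-test estimate on the Jackson series defining the $q$-integral shows that $f$ is analytic at the origin of $\mathbb{C}^{2k}$, so that Theorem~\ref{pliuthm} is applicable.

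The crucial step is to verify, for each $j$, the $q$-partial differential equation $\partial_{q,s_j}(1-\beta_j\eta_{t_j})\{f\}=\partial_{q,t_j}(1-\alpha_j\eta_{s_j})\{f\}$. Here the entire $s_j,t_j$ dependence of the integrand sits in the single factor $(\alpha_j s_j x,\beta_j t_j x;q)_\infty/(s_j x,t_j x;q)_\infty$, which by Proposition~\ref{gthmb} equals $\sum_{n\ge0}\Phi_n^{(\alpha_j,\beta_j)}(s_j,t_j|q)\,x^n/(q;q)_n$; since every $\Phi_n^{(\alpha_j,\beta_j)}$ obeys the relation of Proposition~\ref{liupdepp}, so does this factor for each fixed $x$, and differentiating under the $q$-integral sign transports the identity to $f$. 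I expect this differentiation-under-the-integral step, together with checking that $\partial_{q,s_j}$, $\partial_{q,t_j}$ and the $q$-shifts $\eta_{s_j},\eta_{t_j}$ commute with the Jackson integral in $x$, to be the only genuinely delicate point; it is harmless because these operators act solely on the $s_j,t_j$ variables while the integration is in $x$.

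Granting the $q$-PDE system, Theorem~\ref{pliuthm} produces an expansion $f=\sum_{n_1,\ldots,n_k\ge0}c_{n_1,\ldots,n_k}\prod_{j}\Phi_{n_j}^{(\alpha_j,\beta_j)}(s_j,t_j|q)$ with coefficients independent of all $s_j,t_j$. To identify the coefficients I would set $t_1=\cdots=t_k=0$, use $\Phi_{n_j}^{(\alpha_j,\beta_j)}(s_j,0|q)=(\alpha_j;q)_{n_j}s_j^{n_j}$, and then expand each surviving factor $(\alpha_j s_j x;q)_\infty/(s_j x;q)_\infty$ by the $q$-binomial theorem as a power series in $s_j$. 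Interchanging the absolutely convergent sums with the $q$-integral leaves integrals of the form $\int_u^v x^{m_1+\cdots+m_k}(qx/u,qx/v;q)_\infty/(cx,dx;q)_\infty\,d_qx$, each evaluated in closed form by Proposition~\ref{wangpp}. Comparing coefficients of $s_1^{n_1}\cdots s_k^{n_k}$ then yields $c_{n_1,\ldots,n_k}$ explicitly, and substituting back into the expansion gives the asserted formula.

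Two consistency checks confirm the shape of the result: taking all $s_j=t_j=0$, equivalently retaining only the term with $n_1=\cdots=n_k=0$, must return the plain Andrews--Askey evaluation of Proposition~\ref{andaskpp}, while setting $\alpha_j=\beta_j=0$ should collapse each $\Phi_{n_j}^{(0,0)}$ to the Rogers--Szeg\H{o} polynomial $h_{n_j}(s_j,t_j|q)$. Apart from the differentiation-under-the-integral verification, every remaining step is a coefficient comparison of exactly the type already carried out in Sections~\ref{sec:3}--\ref{sec:5}, so that no new ideas beyond Theorem~\ref{pliuthm} and Proposition~\ref{wangpp} are required.
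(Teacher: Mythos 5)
Your proposal is correct and follows essentially the same route as the paper: there the integral is denoted $I(a,b)$, the single $q$-partial differential equation is checked by direct evaluation, Theorem~\ref{pliuthm} is invoked, and the coefficients are determined by setting $b=0$, applying $\partial_{q,a}^{n}$ at $a=0$, and using Proposition~\ref{wangpp}. The only differences are cosmetic: you prove a $k$-pair generalization (the stated theorem is the case $k=1$), verify the $q$-PDE termwise via Propositions~\ref{gthmb} and~\ref{liupdepp} instead of by direct computation, and extract the coefficients with the $q$-binomial theorem rather than with iterated $q$-derivatives.
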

\begin{eqnarray*}
&&\int_{u}^v \frac{(qx/u, qx/v, \alpha ax, \beta b x; q)_\infty}{(ax, bx, cx, dx; q)_\infty} d_q x\\
&&=\frac{(1-q)v(q, u/v, qv/u, cduv; q)_\infty }{(cu, cv, du, dv; q)_\infty } \\
&&\qquad \times \sum_{n=0}^\infty \frac{ \Phi_n^{(\alpha, \beta)}(a, b|q)}{(q; q)_n}
\sum_{j=0}^n {n\brack j}_q \frac{(cv, dv; q)_j}{(cduv; q)_j}u^jv^{n-j}.
\end{eqnarray*}
\begin{proof} \smartqed
If we use $I(a, b)$ to  denote the $q$-integral in the above theorem, then it is
easy to show that $I(a, b)$ is analytic near $(0, 0)\in \mathbb {C}^2.$ A straightforward
evaluation shows that
\begin{eqnarray*}
&&\partial_{q, a} \{I(a, b)-\beta I(a, bq)\}=\partial_{q, b}\{I(a, b)-\alpha I(aq, b)\}\\
&&=(1-\alpha)(1-\beta)\int_{u}^v \frac{x(qx/u, qx/v, \alpha qax, \beta q bx; q)_\infty}{(ax, bx, cx, dx; q)_\infty} d_q x.
\end{eqnarray*}
Thus, by Theorem~\ref{pliuthm}, there exists a sequence $\{\lambda_n\}$ independent of $a$ and $b$ such that
\begin{equation}
I(a, b)=\int_{u}^v \frac{(qx/u, qx/v, \alpha ax, \beta b x; q)_\infty}{(ax, bx, cx, dx; q)_\infty} d_q x=
\sum_{n=0}^\infty \lambda_n \Phi_n^{(\alpha, \beta)} (a, b|q).
\label{qinteqn2}
\end{equation}
Setting $b=0$ in the above equation and using $\Phi_n^{(\alpha, \beta)} (a, 0|q)=(\alpha; q)_n a^n$, we immediately deduce that
\[
\int_{u}^v \frac{(qx/u, qx/v, \alpha a x; q)_\infty}{( ax, cx, dx; q)_\infty} d_q x=
\sum_{n=0}^\infty \lambda_n   (\alpha; q)_n a^n.
\]
Applying ${\partial}_{q, a}^n$ to act both sides of the  equation, setting $a=0$,
and using Proposition~\ref{wangpp}, we obtain
\begin{eqnarray*}
\lambda_n&&=\frac{(1-q)v(q, u/v, qv/u, cduv; q)_\infty}
{(cu, cv, du, dv; q)_\infty (q; q)_n}\\
&&\qquad \times \sum_{j=0}^n {n\brack j}_q \frac{(cv, dv; q)_j}{(cduv; q)_j}u^jv^{n-j}.
\end{eqnarray*}
Substituting the above equation into (\ref{qinteqn2}), we complete the proof Theorem~\ref{liuqint}.
\qed
\end{proof}

\section*{Acknowledgments}
This paper is dedicated to Professor Krishnaswami Alladi on the occasion of his $60$th birthday.  I am grateful to the anonymous referee for careful reading of the manuscript and many invaluable suggestions
and comments. This work was supported by the National Natural Science Foundation of China (Grant No. 11571114).


\begin{thebibliography}{9}

\bibitem{SalamVerma} Al-Salam, W. A., Verma, A.:
Some remarks on $q$-beta integral.  Proc. Amer. Math. Soc. 85,  360--362 (1982)

\bibitem{Andrews1972}   Andrews, G. E.:
Summations and transformations for basic Appell series.  J. London Math. Soc. (2) 4,
618--622 (1972)

\bibitem{Andrews+Askey} Andrews, G. E., Askey, R.:
Another $q$-extension of the beta function.  Proc. Amer. Math. Soc. 81,  97--100 (1981)


\bibitem{Gas+Rah}Gasper, G., Rahman, M.:
Basic Hypergeometric Series, 2nd edn. Cambridge  Univ. Press, Cambridge (2004).


\bibitem{Jackson1908} Jackson, F. H.:
On $q$-functions and a certain difference operator.  Trans. Roy. Soc. Edin. 46, 253--281 (1908)


\bibitem{Liu2010} Liu, Z.-G.:   Two $q$-difference equations and $q$-operator
identities.   J. Difference Equ. Appl. 16, 1293--1307 (2010)


\bibitem{Liu2011} Liu, Z.-G.:  An extension of the non-terminating ${}_6\phi_5$
summation and the Askey-Wilson polynomials.
 J. Difference Equ. Appl. 17,  1401--1411 (2011)

\bibitem{LiuRam2013}  Liu, Z.-G.:
On the $q$-partial differential equations and $q$-series, The Legacy of Srinivasa Ramanujan.
Ramanujan Math. Soc. Lect. Notes Ser., vol. 20, pp. 213--250. Ramanujan Math. Soc., Mysore (2013)

\bibitem{LiuHahn} Liu, Z.-G.:
A $q$-extension of a partial differential equation
and the Hahn polynomials.  Ramanujan J.  38, 481--501 (2015)

\bibitem{LiuZeng}  Liu,  Z.-G., Zeng, J.:
Two expansion formulas involving the Rogers--Szeg\H{o} polynomials with applications.
 Int. J. Number Theory 11,  507--525 (2015)

 \bibitem{LiuSymmetry} Liu, Z.-G.:	
 On a Reduction Formula for a Kind of Double $q$-Integrals. Symmetry 8,  44 ( 2016)

 \bibitem{Liu2016} Liu, Z.-G.:
 Extensions of Ramanujan's  reciprocity theorem and the Andrews--Askey integral,
 J. Math. Anal. Appl. 443,  1110--1129 (2016)

\bibitem{Malgrange} Malgrange, B.:
Lectures on functions of several complex variables. Springer, Berlin (1984)


\bibitem{Schendel}  Schendel, L.: Zur theorie der functionen,
J. Reine Angew. Math. 84, 80--84 (1877)


\bibitem{Taylor}Taylor, J.:
Several Complex Variables with Connections to Algebraic Geometry and Lie Groups,
Graduate Studies in Mathematics,  vol. 46. Am. Math. Soc., Providence (2002)


\bibitem{VermaJain} Verma, A., Jain, V. K.:
Poisson kernel and multilinear generating functions of some orthogonal polynomials,
J. Math Anal. Appl. 146, 333--352 (1990)

\bibitem{Wang2009} Wang, M.:
 $q$-integral representation of the Al--Salam--Carlitz polynomials,
  Appl. Math. Lett.  22, 943--945  (2009)

\end{thebibliography}
\end{document}